\DeclareMathOperator{\Hroot}{root}
\DeclareMathOperator{\Var}{Var}
\newcommand{\personal}[1]{}
\newtheorem{theorem}{Theorem}[section]
\newtheorem*{theorem*}{Theorem}
\newtheorem{lemma}[theorem]{Lemma}
\newtheorem{corollary}[theorem]{Corollary}
\newtheorem{question}[theorem]{Question}
\newtheorem{observation}[theorem]{Observation}
\newcommand{\brssixthaug}{\color{black}}
\newcommand{\points}[0]{V}
\newcommand{\newdelta}[0]{\varepsilon}
\newcommand{\newT}[0]{Q}
\newcommand{\newr}[0]{\alpha}
\newcommand{\newI}[0]{J}
\newcommand{\whp}[0]{w.h.p.}
\newcommand{\dist}[1]{\mathrm{dist}(#1)}
\title{Reconstructing almost all of a point set in \(\mathbb{R}^d\) from randomly revealed pairwise distances}
\author{Douglas Barnes\thanks{\{db875, jp895, jp899, bwr26, ss2765\}@cam.ac.uk, Department of Pure Mathematics and Mathematical Statistics (DPMMS), University of Cambridge, Wilberforce Road, Cambridge, CB3 0WA, United Kingdom} , 
Jan Petr$^{*}$, 
Julien Portier$^{*}$,
Benedict Randall Shaw$^{*}$,
Alan Sergeev$^{*}$}
\date{}
\begin{document}

\maketitle

	\begin{abstract}
	Let \(\points\) be a set of \(n\) points in \(\mathbb{R}^d\), and suppose that the distance between each pair of points is revealed independently with probability \(p\). We study when this information is sufficient to reconstruct large subsets of \(\points\), up to isometry.
 
    Strong results for \(d=1\) have been obtained by Gir\~ao, Illingworth, Michel, Powierski, and Scott. In this paper, we investigate higher dimensions, and show that if \(p>n^{-2/(d+4)}\), then we can reconstruct almost all of \(\points\) up to isometry, with high probability. We do this by relating it to a polluted graph bootstrap percolation result, for which we adapt the methods of Balogh, Bollob\'as, and Morris.
	\end{abstract}
	\section{Introduction}
	
	Let \(\points\) be a set of \(n\) points in \(\mathbb{R}^d\), and suppose that all we know about $\points$ are the distances between each pair in \(\points\) independently with probability \(p\). That is, the distance between points \(x\) and \(y\) is revealed to us whenever \(\{x,y\}\in\mathcal{P}\subset{\points}^{(2)}\), for \((\points,\mathcal{P})\) distributed as \(G(n,p)\), the Erd\H{o}s-Rényi random graph. We want to determine the range of $p$ for which we can reconstruct \(\points\), or alternatively some large \(Y \subseteq \points\), where \textit{reconstruction} of a set here means determining the positions of its points up to isometry. A set $Y \subseteq \points$ whose position can be determined up to isometry given $(\points,\mathcal{P})$ will be called \emph{reconstructible}.

 Recently, Benjamini and Tzalik \cite{benjamini_determining_2022} proved the following result.

    \begin{theorem}
    There exists an absolute constant \(C\) such that if \(\points\) is a set of \(n\) points in \(\mathbb{R}\) and each pairwise distance of points in $\points$ is revealed independently with probability \(p \geq \frac{C\ln n}{n}\), then $\points$ is reconstructible {\whp}
    \end{theorem}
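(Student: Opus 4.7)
My plan is to identify a reconstructible ``anchor'' in $\points$ and propagate reconstruction to the remaining points. The key fact I would exploit is that in $\mathbb{R}$, a point $v$ whose distances to two already-reconstructed points $x,y$ at distinct positions are revealed is itself reconstructible: the two equations $|v-x| = d_1$ and $|v-y|=d_2$ admit at most one simultaneous solution consistent with the ambient isometry class fixed by the anchor.

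\textbf{Steps.} First, I would establish structural properties of $G(n,p)$ with $p = C\ln n /n$ for $C$ sufficiently large: by standard Chernoff/union bounds, \whp{} every vertex has degree $(1\pm o(1))C\ln n$, the graph is connected, it contains $\Theta(\ln^3 n)$ triangles, and it has strong expansion of large subsets. Second, I would pick any triangle $\{a,b,c\}$ in $G$ as a seed: its three revealed distances satisfy an equality $d_{ac}=d_{ab}+d_{bc}$ (for whichever vertex is in the middle), which pins the configuration of $\{a,b,c\}$ down to isometry; set $R_0 := \{a,b,c\}$. Third, iteratively define $R_{t+1} := R_t \cup \{v \notin R_t : |N_G(v) \cap R_t|\geq 2\}$. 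By the key fact, every vertex ever added to some $R_t$ is reconstructible, so it suffices to show $R_\infty = \points$ \whp.

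\textbf{Main obstacle.} The delicate part is showing that this $2$-neighbour bootstrap percolates from such a small seed. The expected number of vertices with $\geq 2$ neighbours in a fixed $3$-element set is $\Theta(np^2) = o(1)$, so $R_t$ does not grow in the very first round. I would bridge this in two stages. (i) Enlarge the anchor using a richer rigid substructure -- e.g.\ a ``book'' consisting of many triangles through a common edge (of size $\Omega(\ln n)$ \whp) which is reconstructible in one shot, or a small reconstructible subgraph built by locating an edge $uv$ together with several common neighbours. (ii) Analyse the bootstrap in two phases: a \emph{takeoff} phase in which $|R_t|$ climbs from polylogarithmic size up to $\omega(\sqrt{n/\ln n})$ by iterated absorption along the many short cycles of $G(n,p)$; and a \emph{completion} phase in which, once $|R_t|$ is that large, every outside vertex has $\omega(1)$ expected neighbours in $R_t$, so a first-moment/Chernoff argument over potential ``stuck sets'' $S = \points\setminus R_\infty$ (in which every vertex has $\leq 1$ neighbour outside $S$) shows $S = \emptyset$ \whp. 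The takeoff phase is the heart of the argument and would most naturally follow the closed-set technology for bootstrap percolation on $G(n,p)$; the completion phase is a standard expansion calculation.
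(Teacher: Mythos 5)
The paper does not prove this statement; it cites it as a result of Benjamini and Tzalik (Theorem~1.1 in the paper is stated without proof). So I assess your argument on its own merits, and it has a genuine gap.

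The propagation rule you use---add $v$ to $R_t$ once it has two revealed distances to already-placed points---is exactly $2$-neighbour graph bootstrap percolation on $G(n,p)$. At $p = C\ln n/n$ this process cannot grow from any seed of size $o(n/(\ln n)^2)$, and the problem is not one a ``takeoff/completion'' split can repair. Concretely, if $|R_t| = m$ then the expected number of vertices outside $R_t$ with at least two neighbours in $R_t$ is of order
\[
n\binom{m}{2}p^2 \asymp \frac{m^2(\ln n)^2}{n},
\]
which is smaller than $m$ whenever $m = o(n/(\ln n)^2)$ and is $o(1)$ whenever $m = O(\sqrt n/\ln n)$. From a polylogarithmic seed the process therefore stalls immediately, and your takeoff target $m = \omega(\sqrt{n/\ln n})$ is both unreachable and far below the scale needed. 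Your completion-phase claim that at that size ``every outside vertex has $\omega(1)$ expected neighbours in $R_t$'' is also arithmetically false: $mp = \omega(\sqrt{n/\ln n})\cdot C\ln n/n = \omega\bigl(\sqrt{\ln n/n}\,\bigr) \to 0$, not $\to\infty$; one needs $m$ a constant fraction of $n$ for that union bound to close. The seed-enlargement step fails for the same sparsity reason: a book with spine $uv$ and $k$ pages spans $2k+1$ edges on $k+2$ vertices, so the expected number of books with even $k=2$ pages is $\Theta(n^4p^5) = \Theta((\ln n)^5/n) \to 0$; w.h.p.\ no edge of $G(n,C\ln n/n)$ lies in two triangles, let alone $\Omega(\ln n)$ of them.

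The deeper point is that any local rule of the form ``place $v$ once it sees two placed neighbours'' is a special case of $K_4$-bootstrap percolation, whose critical probability (Theorem~\ref{thm:Balogh} with $d=1$, $\eta(1)=2$) is of order $n^{-1/2}$---polynomially larger than $\ln n/n$. Reconstruction at $p \asymp \ln n/n$ has to exploit non-local, cycle-based information: for instance, fix positions along a spanning tree of the (w.h.p.\ connected) graph up to one $\pm$ sign per tree edge, and use every remaining revealed distance as a constraint on those signs, arguing that w.h.p.\ the constraint system has a unique solution up to a global reflection. No single vertex ever needs two already-placed neighbours in such an argument. This is the flavour of the Benjamini--Tzalik proof, and the sharp Gir\~ao--Illingworth--Michel--Powierski--Scott threshold $(\log n + \log\log n)/n$ is likewise the minimum-degree-$2$ threshold rather than a bootstrap-percolation threshold.
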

	
	Gir\~ao, Illingworth, Michel, Powierski, and Scott \cite{girao_reconstructing_2023} improved upon this result, proving a sharp ``threshold'' for reconstructing the whole of $\points$.

	\begin{theorem}
		Let \(\points\) be a set of \(n\) points in \(\mathbb{R}\). Suppose the graph \(G=(\points,\mathcal{P})\) of revealed pairwise distances is distributed as \(G(n,p)\). Then the following hold {\whp}
		\begin{enumerate}[label=(\alph*)]
			\item If $p \leq \frac{\log n+\log \log n -\omega(1)}{n}$, then \(\points\) is not reconstructible.
			\item If $p \geq \frac{\log n+\log \log n +\omega(1)}{n}$, then  \(\points\) is reconstructible.
		\end{enumerate}\label{gimps-whole}
	\end{theorem}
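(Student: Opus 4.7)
For part~(a), I would note that the threshold $(\log n + \log \log n)/n$ is precisely the sharp threshold for $G(n,p)$ to have minimum degree at least $2$: a standard first/second moment calculation on the number of vertices of degree at most $1$ shows that if $p \leq (\log n + \log \log n - \omega(1))/n$ then with high probability there exists some $v \in \points$ with at most one revealed distance. If $v$ is isolated in $G$, its position is unconstrained by the data, and replacing it by a sufficiently generic $v' \in \mathbb{R}$ (i.e.\ outside the finite orbit of $v$ under isometries of $\mathbb{R}$ that fix $\points \setminus \{v\}$) yields a non-isometric point set consistent with the same revealed distances. If $v$ has a unique revealed neighbor $u$, reflecting $v$ through $u$ preserves $|u-v|$ but generically yields a non-isometric configuration; in either case $\points$ is not reconstructible.

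For part~(b), the plan is to extract from $G$ a substructure rich enough to propagate positions from one anchoring edge to every vertex. A routine moment computation gives that with high probability $G$ has minimum degree at least $2$. The key combinatorial ingredient I would aim for is a \emph{$2$-construction sequence}: an edge $uv \in G$ together with an ordering $w_1, \ldots, w_{n-2}$ of the remaining vertices such that each $w_i$ is joined by at least two edges to $\{u, v, w_1, \ldots, w_{i-1}\}$. Given such a sequence, fix the global isometry by placing $u$ at $0$ and $v$ at $|u - v|$, and then process the $w_i$ in order: the two revealed distances from $w_i$ to earlier vertices cut out at most two candidate positions on the line, and a small amount of extra edge-redundancy distinguishes the two candidates outside a measure-zero family of distance coincidences among the coordinates of $\points$.

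The main obstacle is twofold: (1)~showing that a $2$-construction sequence exists with high probability at this density, and (2)~handling the non-generic $\points$ for which two candidate positions for some $w_i$ may both be consistent with every one of its revealed distances. For (1), one can mimic the combinatorial approach used to show that the $2$-core of $G(n,p)$ is spanning at this density, union-bounding over minimal obstructions, namely small vertex subsets with few edges leaving them. For (2), the cleanest fix is sprinkling: write $G(n,p) = G(n, p_1) \cup G(n, p_2)$ with $p_1$ supplying the $2$-construction sequence and $p_2$ a small independent sprinkle used to break residual ambiguity; the probability that a fixed pair of candidate positions for $w_i$ is consistent with an additional random revealed edge is bounded away from $1$, so sufficient sprinkling resolves every ambiguity with high probability. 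Balancing the two densities and controlling the union bound over potential ambiguities is where I expect the bulk of the technical effort.
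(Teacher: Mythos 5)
The paper does not prove this statement; it is quoted from Gir\~ao, Illingworth, Michel, Powierski, and Scott \cite{girao_reconstructing_2023}, so there is no internal proof to compare your attempt against. Judged on its own, your proposal has a genuine gap in part~(b): a ``$2$-construction sequence'' as you define it almost surely does not exist at $p = \Theta(\log n / n)$. Your sequence requires $w_1$ to be a common neighbour of $u$ and $v$, and each subsequent $w_i$ to have at least two neighbours among $\{u, v, w_1, \ldots, w_{i-1}\}$; this is exactly $2$-neighbour bootstrap percolation started from the edge $uv$. At this density the expected number of $K_4$-minus-an-edge subgraphs (the minimal structure allowing a triangle to recruit a fourth vertex) is $\Theta(n^4 p^5) = \Theta\big((\log n)^5 / n\big) \to 0$, so with high probability no starting edge can recruit even a fourth vertex, let alone span $\points$. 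The analogy with ``the $2$-core of $G(n,p)$ is spanning at this density'' is misleading: minimum degree at least $2$ is a local condition and says nothing about being able to build the graph greedily from a two-vertex seed by always attaching with two back-edges. Sprinkling does not rescue this, since the sprinkled density $p_2 \leq p$ inherits the same bottleneck; some genuinely different propagation mechanism is needed.

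Part~(a) is closer but still not complete as written. Your reflection argument for a degree-$1$ vertex $v$ with unique neighbour $u$ is only asserted to work ``generically,'' whereas the theorem is stated for \emph{every} point set $\points$. If $\points \setminus \{v\}$ happens to be symmetric about $u$, the two candidate positions for $v$ are isometric and this particular ambiguity does not witness non-reconstructibility. You would need to argue either that a degree-$0$ vertex exists, or that among the (typically many) degree-$\leq 1$ vertices at this density at least one cannot be rescued by such a coincidental symmetry; this requires an additional case analysis that the sketch elides.
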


Moreover, the authors of \cite{girao_reconstructing_2023} asked whether similar results could be proved for sets lying in \(\mathbb{R}^d\) for \(d\geq 2\). They observed that we cannot expect to reconstruct the whole of \(\points\) for non-trivial \(p\). Indeed, for an embedding where $n-2$ points lie in a hyperplane and the other two points $u, v$ do not, the whole of $\points$ can only be reconstructed if the distance between $u$ and $v$ is revealed, as otherwise each of $u, v$ could be on any of the two sides of the hyperplane, as shown in \Cref{worstcase} for $d=2$.

	\begin{figure}[H]
		\centering
		\begin{tikzpicture}
		\draw (0,0)--(4,2);
		\filldraw[black] (0,0) circle (2pt) node[anchor=north west]{\(x_1\)};
		\filldraw[black] (0.6,0.3) circle (2pt) node[anchor=north west]{\(x_2\)};
		\filldraw[black] (1.2,0.6) circle (2pt) node[anchor=north west]{\(x_3\)};
		\filldraw[black] (1.6,0.8) circle (2pt) node[anchor=west]{\hspace{1em}\(\cdots\)};
		\filldraw[black] (2.0,1.0) circle (2pt);
		\filldraw[black] (2.2,1.1) circle (2pt);
		\filldraw[black] (2.6,1.3) circle (2pt);
		\filldraw[black] (3.0,1.5) circle (2pt);
		\filldraw[black] (3.2,1.6) circle (2pt);
		\filldraw[black] (3.6,1.8) circle (2pt);
		\filldraw[black] (4,2) circle (2pt) node[anchor=north west]{\(x_{n-2}\)};
		
		\filldraw[red] (0.5,2.5) circle (2pt) node[anchor=south east]{\(u\)};
		\filldraw[red] (2.5,2.5) circle (2pt) node[anchor=south west]{\(v\)};
		\draw[red] (3.5,0.5) circle (2pt);
		\draw[red,dashed] (0.5,2.5)--(2.5,2.5);

		\draw (6,0)--(10,2);
		\filldraw[black] (6,0) circle (2pt) node[anchor=north west]{\(x_1\)};
		\filldraw[black] (6.6,0.3) circle (2pt) node[anchor=west]{\hspace{1em}\(\cdots\)};
		\filldraw[black] (7.2,0.6) circle (2pt);
		\filldraw[black] (7.6,0.8) circle (2pt);
		\filldraw[black] (8.0,1.0) circle (2pt);
		\filldraw[black] (8.2,1.1) circle (2pt);
		\filldraw[black] (8.6,1.3) circle (2pt);
		\filldraw[black] (9.0,1.5) circle (2pt);
		\filldraw[black] (9.2,1.6) circle (2pt);
		\filldraw[black] (9.6,1.8) circle (2pt);
		\filldraw[black] (10,2) circle (2pt) node[anchor=north west]{\(x_{n-2}\)};
		
		\filldraw[red] (6.5,2.5) circle (2pt) node[anchor=south east]{\(u\)};
		\draw[red] (8.5,2.5) circle (2pt);
		\filldraw[red] (9.5,0.5) circle (2pt) node[anchor=north west]{\(v\)};
		\draw[red,dashed] (6.5,2.5)--(9.5,0.5);
;		\end{tikzpicture}
		\caption{These two configurations differ only in the distance marked in red.}\label{worstcase}
	\end{figure}
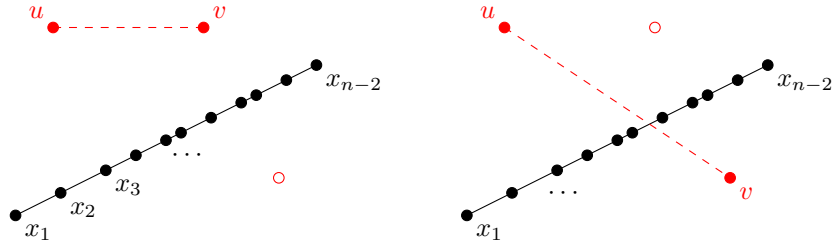

 Therefore, Gir\~ao, Illingworth, Michel, Powierski, and Scott asked the following question.
  \begin{question}
  \label{qu:GIMPS}
  Let $V$ be a set of $n$ points in $\mathbb{R}^d$. For which range of \(p\) does $\points$ contain a reconstructible set of size $\Omega(n)$ {\whp}?
  \end{question}

    In the one-dimensional case $d=1$, they proved the following result.

    \begin{theorem}
    \label{thm:GIMPSLinear}
		Let \(\points\) be a set of \(n\) points in \(\mathbb{R}\). Suppose the graph \(G=(\points,\mathcal{P})\) of revealed pairwise distances is distributed as \(G(n,p)\). 
    \begin{enumerate}[label=(\alph*)]
			\item If $p \geq 42/n$, then $\points$ contains a reconstructible set of size $\Omega(n)$ {\whp}
			\item If \(pn\to\infty\), then $\points$ contains a reconstructible set of size \(n-o(n)\) {\whp}
		\end{enumerate}
	\end{theorem}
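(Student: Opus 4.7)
The plan rests on reducing $1$-dimensional reconstruction to a $2$-bootstrap percolation question on $G=(V,\mathcal{P})\sim G(n,p)$. The key geometric observation is that if $u,v\in V$ are two distinct reconstructed points and $w\in V$ has revealed distances to both, then $w$'s position is uniquely determined: it lies in $\{u\pm d(u,w)\}\cap\{v\pm d(v,w)\}$, and these two two-element sets, being symmetric about the distinct centres $u,v$, share at most one point (and must share exactly one, since the true $w$ is a solution). Thus the $2$-bootstrap closure of any revealed edge --- formed by iteratively adjoining any vertex with at least two neighbours already in the set --- is automatically reconstructible. A secondary observation is the overlap principle: two reconstructible sets sharing at least two points can be merged, since two fixed points pin down the isometry group of $\mathbb{R}$ entirely.

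For part~(b), with $pn\to\infty$, I would split the edges into two independent rounds $G=G_1\cup G_2$ with $p_1,p_2\approx p/2$. In $G_1$ I would find a reconstructible seed: triangles exist \whp{} whenever $np\to\infty$, giving a seed of size $3$. Then I would run $2$-bootstrap in $G_2$ and track two phases. In the slow initial phase, with $|S_t|\ll 1/p$, the expected number of new additions is only $\Theta(|S_t|^2 p^2 n)$, so a branching-process style or amplification argument is needed to push the seed past the critical size $k^\ast\sim 1/p$. In the fast phase, with $|S_t|\gg 1/p$, each remaining vertex has $|S_t|p\to\infty$ neighbours in $S_t$ and is absorbed \whp{}, so the closure swells to $n-o(n)$ in a bounded number of rounds. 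Finally, a first-moment computation shows that only $o(n)$ vertices have fewer than two edges into such a large set, completing the bound.

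For part~(a), with $p=42/n$ fixed, the sparse regime prevents $2$-bootstrap from a single bounded seed from reaching linear size: the expected per-step growth from a set of size $k$ is $\Theta(k^2/n)$, so bootstrap stalls below $k=\Omega(\sqrt n)$. The plan is instead to exploit the overlap principle. I would locate the giant $2$-core of $G(n,42/n)$, which has size $\Omega(n)$ since $np=42>1$, and inside it identify many short reconstructible sub-structures (e.g.\ $2$-bootstrap closures seeded at edges lying in small cycles) together with a meta-graph whose edges record pairwise $2$-vertex overlaps. Showing that this meta-graph has a giant component whose vertex union is $\Omega(n)$ then gives the conclusion. The hardest part --- and the main obstacle in both parts --- is precisely this sparse-regime percolation analysis: one must certify the survival of enough overlapping reconstructible pieces in the face of ``pollution'' by vertices that fail to anchor any piece, exactly the sort of estimate that the Balogh--Bollob\'as--Morris polluted bootstrap percolation framework is designed to produce.
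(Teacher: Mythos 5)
The first thing to flag is that the paper does not prove this theorem at all: it is a cited result of Gir\~ao, Illingworth, Michel, Powierski, and Scott (\Cref{thm:GIMPSLinear} is stated with attribution, no proof given), so there is no ``paper's own proof'' to compare against. I will therefore assess your proposal on its own merits.

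Your two local observations are correct: in $\mathbb{R}$, a vertex $w$ with known distances to two distinct already-placed points $u\neq v$ is uniquely placed (the two candidate positions from $u$ and the two from $v$ intersect in exactly one point), and two reconstructible sets sharing two points can be merged. These are exactly the $d=1$ specialisations of the paper's \Cref{obs:Percolation} and \Cref{cor:GeometryFact}, where every pair of distinct reals is $1$-independent so there is no pollution.

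The gap is in the percolation step, and it is not one that a sharper ``branching-process or amplification'' argument can close. From a reconstructed set $S$ of size $k$, the expected number of vertices newly absorbed in one step of $2$-bootstrap is $\Theta(nk^2p^2)$, so the critical seed size is $k^\ast \asymp 1/(np^2)$, not $1/p$ as you write. Starting from a constant seed (your triangle, $k=3$), the expected number of new additions is $\Theta(np^2)$, which is $o(1)$ throughout the entire range $n^{-1}\ll p\ll n^{-1/2}$; in that regime the process is subcritical from the outset and \whp\ adds nothing at all. But Theorem~\ref{thm:GIMPSLinear}(b) is supposed to hold for all $pn\to\infty$, in particular for $p$ just above $1/n$, where $k^\ast$ is nearly linear in $n$. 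So $2$-bootstrap (equivalently, $K_4$-graph-bootstrap, which is the $d=1$ case of the paper's machinery) from a bounded seed cannot reach $n-o(n)$, or even $\omega(1)$, in the sparse half of the range. This is consistent with the paper's own discussion: the present paper's bootstrap-percolation approach yields only $p>n^{-1/2+o(1)}$ when $d=1$, and the authors explicitly note in \Cref{sec:Conclusion} that GIMPS's $d=1$ result is much stronger, suggesting the true threshold is $\Theta(1/n)$. In other words, the GIMPS proof does not go through bootstrap percolation from a small seed, and any proposal that does cannot cover the stated range.

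Part~(a) has the same structural problem plus an additional one: for $p=42/n$ the critical $2$-bootstrap size $k^\ast\asymp n/42^2$ is already linear, so no seed of sublinear size percolates, and the proposal falls back on an undeveloped claim. Locating the giant $2$-core of $G(n,42/n)$ is fine, but membership in the $2$-core does not imply reconstructibility (indeed the paper notes that arbitrarily highly connected graphs can have non-reconstructible embeddings in $\mathbb{R}$), and the assertion that a ``meta-graph'' of small reconstructible pieces linked by $2$-vertex overlaps has a giant component whose vertex union is $\Omega(n)$ is precisely the content one would need to prove; no argument for it is offered. As written, both parts of the proposal rest on an approach that provably fails in the regime of interest, not merely on estimates that remain to be supplied.
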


 Note that for $p \leq \frac{1}{n}$, a graph sampled as $G(n,p)$ does not have any connected component of linear size {\whp} and so the previous result gives a weak threshold about reconstructibility of a linear size subset.

Our main contribution in this paper is the first non-trivial upper bound on the reconstructibility threshold from \Cref{qu:GIMPS}.
 
    \begin{theorem}
    \label{thm:main}
		Let \(d\geq 1\) and \(\points\) be a set of \(n\) points in \(\mathbb{R}^d\). Suppose the graph \(G=(\points,\mathcal{P})\) of revealed pairwise distances is distributed as \(G(n,p)\). If \(p\gg q(n,d)\), then $\points$ contains a reconstructible set of size \(n-o(n)\) {\whp}, where \[q(n,d):=n^{-1/\eta(d)+o(1)} \qquad \text{and} \qquad
        \eta(d):=\frac{d+4}2-\frac1{d+1}.\]
	\end{theorem}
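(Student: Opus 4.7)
The strategy is to reduce the geometric reconstruction problem to a graph bootstrap percolation problem, and then attack the latter by adapting the Balogh--Bollob\'as--Morris method for $K_r$-bootstrap percolation. First I would make a mild genericity assumption --- that every $d+1$ points of $\points$ are affinely independent --- and absorb any remaining degenerate embeddings into the allowed $o(n)$ loss. Under genericity, the key geometric observation is this: if $x_1,\dots,x_{d+1}$ are already reconstructed and have known pairwise distances, they span an affine copy of $\mathbb{R}^d$, so any point $v$ whose distances to all of $x_1,\dots,x_{d+1}$ are known is uniquely located, and hence $|vw|$ becomes computable for every previously reconstructed point $w$. Iterating, if we see a copy of $K_{d+3}-uv$ in the currently-known distance graph --- where the remaining $d+1$ ``core'' vertices are mutually joined and each of $u,v$ is joined to all cores --- then the missing distance $|uv|$ can be deduced. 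This is exactly the bootstrap rule for $K_{d+3}$, so it suffices to prove that $K_{d+3}$-bootstrap percolation on $G(n,p)$ infects enough edges to rigidly reconstruct an $n-o(n)$-sized subset of $\points$.

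The relevant ``almost-template'' for this bootstrap is $K_{d+3}-e$, whose $2$-density is
\[
\frac{e(K_{d+3})-2}{v(K_{d+3})-2}=\frac{\binom{d+3}{2}-2}{d+1}=\eta(d),
\]
so $p\sim n^{-1/\eta(d)}$ is exactly the scale at which copies of $K_{d+3}-e$ begin to appear densely around typical pairs of vertices. I would then follow the BBM blueprint in two phases. A \emph{seeding} phase locates an initial rigidly reconstructible core --- essentially a small copy of $K_{d+3}$ in $G(n,p)$ --- which exists w.h.p.\ once $p$ is above this scale. A \emph{growing} phase shows that once a core is present, the bootstrap cascade rigidifies all but $o(n)$ of the remaining vertices. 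Because we only require an $n-o(n)$ conclusion and not full percolation, we do not need the full BBM exponent, only the weaker statement that the cascade reaches almost all vertices; this slack is what yields the exponent $(e-2)/(v-2)$ rather than a more refined threshold.

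The main obstacle will be implementing the growing phase: at each step we add \emph{deduced} edges that depend on earlier steps and on the adversarial geometric structure of $\points$, so the edges added are not themselves distributed as an Erd\H{o}s--R\'enyi graph. The cleanest way to handle this is to prove a \emph{polluted} $K_{d+3}$-bootstrap theorem: for any fixed auxiliary graph $G^*$ on $\points$ and an independent $G(n,p)$ with $p\gg n^{-1/\eta(d)+o(1)}$, the $K_{d+3}$-bootstrap process on $G^*\cup G(n,p)$ covers all but $o(n)$ vertices w.h.p. The technical heart is to rework the BBM second-moment and stuck-configuration estimates so that they remain valid when an arbitrary set of extra edges is supplied deterministically. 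Once the polluted result is in place, \Cref{thm:main} follows by applying it with $G^*$ representing the (step-dependent) set of inferred distances produced by the reconstruction process.
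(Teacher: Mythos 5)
Your high-level plan — translate distance reconstruction into $K_{d+3}$-bootstrap percolation via the geometric observation, compute the $2$-density $\eta(d)$ of $K_{d+3}-e$, and adapt the Balogh--Bollob\'as--Morris second-moment machinery — is the right frame and agrees with the paper. However, there are two genuine gaps that would stop the argument.

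First, the genericity assumption cannot simply be assumed and ``absorbed into the allowed $o(n)$ loss.'' The obstruction is not a sparse set of exceptional tuples: if, say, $n/2$ of the points lie in a hyperplane, then $\Theta(n^{d+1})$ of the $(d+1)$-tuples are $d$-dependent, and no deletion of $o(n)$ points removes them. You would need to discard $\Omega(n)$ points to restore genericity, destroying the conclusion. The paper handles this with a dichotomy: either few $(d+1)$-tuples are $d$-dependent (Lemma~\ref{lem:DichotomyHyperplaneDependent}), or there is a lower-dimensional subspace $\Pi_{d'}$ containing $n^{\Omega(1)}$ points. In the second case one reconstructs within $\Pi_{d'}$ first, then reconstructs distances of the remaining points to $\Pi_{d'}$ and projects onto $\Pi_{d'}^\perp$, reducing the dimension and iterating at most $d$ times (Corollary~\ref{cor:Dichotomy2}, Lemmas~\ref{lem:RecoveringDistanceToHyperplane} and~\ref{lem:InductionTechnicalLemma}). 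This dimension-reduction loop is the core of the proof and is entirely missing from your proposal.

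Second, your notion of ``polluted'' bootstrap is the wrong one. Adding an auxiliary graph $G^*$ of deterministic edges to $G(n,p)$ and running the usual $K_{d+3}$-bootstrap can only \emph{help} percolation (monotonicity), so that statement is essentially vacuous and addresses nothing. The real difficulty is a \emph{restriction}: you may only infer the missing edge $uv$ of a $K_{d+3}-e$ when the other $d+1$ vertices are $d$-independent, so some copies of $K_{d+3}$ are unusable. The paper therefore defines polluted percolation via a set $\mathcal{H}$ of \emph{forbidden} $(d+1)$-element bases, and shows (Theorem~\ref{thm:PollutedBootstrapPercolation}) that if $|E(\mathcal{H})|\le n^{d+\mu}$ with $\mu<1$, the BBM counting of rooted $H_r$-embeddings still goes through after excluding the ``dirty'' embeddings; the technical content is that the proportion of clean embeddings is bounded below by a constant (Lemma~\ref{lem:PollutedBootstrapPercolation}). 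Also, your claim that the deduced edges ``are not distributed as Erd\H{o}s--R\'enyi'' is not the issue — bootstrap percolation never requires that; the randomness enters only through the initial $G(n,p)$.
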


    Although Benjamini and Tzalik \cite{benjamini_determining_2022} were the first to study those problems in this setting, a long line of research has already been conducted in the case of \emph{generic} point sets. A set $\points \subset \mathbb{R}^d$ is said to be \emph{generic} if the $d|\points|$ coordinates of the vertices are algebraically independent over $\mathbb{Q}$. 
    An embedding $f \colon \points \rightarrow \mathbb{R}^d$ is called \emph{generic} if its image is generic.
    A graph \((\points,\mathcal{P})\) is \emph{globally rigid in $\mathbb{R}^d$} if it has some generic embedding
    in $\mathbb{R}^d$ which is reconstructible from its edge lengths, i.e. the distances \(uv\) such that \(uv\in\mathcal{P}\). 
    It has been shown \cite{connelly2005generic, gortler_characterizing_2012} that a graph is globally rigid if and only if all
    of its generic embeddings are reconstructible from their edge lengths.
    
    The global rigidity of random graphs has attracted significant attention. For instance, Lew, Nevo, Peled and Raz \cite{lew_sharp_2022} proved that \(G(n,p)\) is globally \(d\)-rigid {\whp}\ exactly when it has minimum degree \(d+1\), which has a sharp threshold at \(p = \frac{\log{n}+ d\log{\log{n}}}{n}\). 
    We would like to remark that the restriction to generic embeddings in the definition of global rigidity is a significant weakening. For example, it is folklore (see for example Theorem 63.2.7 in \cite{jordan2017global}) that a graph is globally rigid in $\mathbb{R}$ if and only if it is $2$-connected, which is not the case for reconstructing arbitrary point sets. Indeed, Gir\~ao, Illingworth, Michel, Powierski, and Scott showed \cite{girao_reconstructing_2023} that there are graphs with arbitrarily high connectivity which can be embedded in $\mathbb{R}$ in such a way that their vertex sets cannot be reconstructed from their edge lengths.

 Finally, we note that the scope of reconstruction problems is quite broad. For example, Lemke, Skiena and Smith \cite{lemke_reconstructing_2003} considered the problem of reconstructing a point set based on pairwise distances, where the correspondence between distances and the pairs of points to which they belong is not known. As mentioned in their paper, this problem has natural applications in the worlds of X-ray crystallography and site mapping of DNA. On the other hand, the problems of reconstructing a point set based on labelled distances have applications in network localisation \cite{eren_rigidity_2004} and molecular conformation \cite{hendrickson_molecule_1990}.

    \subsection{Proof outline}

    In this paper, we view \(\mathbb{R}^m\) as an affine space, in that the origin does not play a special role. A \emph{subspace} will mean an affine subspace. Given $d$, we call a multiset $X$ of $d+1$ points \emph{$d$-dependent} if there is a subspace of dimension $d-1$ that contains $X$. If there is no such subspace, we will say that $X$ is \emph{$d$-independent}. Any multiset of size $1$ is $0$-independent.

    The main idea of our proof is based on the following observation, proved in a stronger form as \Cref{cor:GeometryFact}.

    \begin{observation}
    \label{obs:Percolation}
    Let \(u,v, x_1, x_2, ... x_{d+1} \in \mathbb{R}^d\) be points for which \(\{x_i|i\in [d+1]\}\) is \(d\)-independent. If all pairwise distances known except \(uv\) are known, we can reconstruct the distance \(uv\).
    \end{observation} 
    Therefore if we knew that $\points$ was in general position, i.e. with no set of \(d+1\) points being \(d\)-dependent, then repeatedly applying \Cref{obs:Percolation} would allow us to follow a \mbox{\(K_{d+3}\)-percolation}, defined in Section \ref{sec:tools}, on the graph of known edges. We would then immediately deduce \Cref{thm:main} by \Cref{thm:Balogh}, a general result about graph bootstrap percolation by Balogh, Bollob\'as, and Morris.
    For a set of points $\points$ not necessarily in general position, we still want to apply a \(K_{d+3}\)-bootstrap percolation on the graph of known distances, but only reconstructing \(uv\) from copies of \(K_{d+3}\) such that \(V(K_{d+3})\setminus\{u,v\}\) is \(d\)-independent, so that \Cref{obs:Percolation} applies. 
    

    To account for \(d\)-dependent sets, we will work with \emph{polluted graph bootstrap percolation}, defined in \Cref{sec:PollutedBootstrap}, which is similar to the classical graph bootstrap percolation, except that there is a set of hyperedges $E(\mathcal{H})$ on which our bootstrap percolation cannot spread.
    In our reconstruction problem, we will take $E(\mathcal{H}) \subset V^{(d+1)}$ to be exactly the $d$-dependent subsets of $V$ of size $d+1$. 
    We then show that \Cref{thm:Balogh} essentially still holds for the same range of $p$ for a polluted graph bootstrap percolation if the set of hyperedges $E(\mathcal{H})$ is small enough, see \Cref{thm:PollutedBootstrapPercolation}.
    Then we can deduce our main result, \Cref{thm:main}, from this polluted bootstrap percolation result, \Cref{thm:PollutedBootstrapPercolation}, provided $|E(\mathcal{H})|$ is small enough in terms of $n$. In the general case, $|E(\mathcal{H})|$ is not always sufficiently small to apply \Cref{thm:PollutedBootstrapPercolation}, and consequently we use \Cref{cor:Dichotomy2} to find a reasonably large subset of $\points$ where we can apply \Cref{thm:PollutedBootstrapPercolation}.
    
    More precisely, we show in \Cref{cor:Dichotomy2} that for every multiset of points $\points$, there exists a subspace $\Pi$ of dimension \(d'\) which contains reasonably many points of $\points$, and such that most families of $\Pi \cap \points$ of size $d'+1$ are \(d'\)-independent. The particular case where $d'=0$ needs to be taken care of separately, but in the general case where $d' \geq 1$, we can indeed reconstruct the pairwise distances between almost all of $\Pi \cap \points$ by using our polluted graph bootstrap percolation result in \Cref{thm:PollutedBootstrapPercolation}. We then show that we can reconstruct the projections of almost all of $\points$ onto the subspace $\Pi$ in \Cref{lem:RecoveringDistanceToHyperplane}.
    Combining all of the above results, we obtain \Cref{lem:InductionTechnicalLemma}, which enables us to reduce the problem to one with \(V\) projected onto \(\Pi^\perp\) as its point set, reducing the dimension of the problem by at least $1$. By sprinkling and iterating this at most $d$ times, we are able to deduce \Cref{thm:main}.
    
    The rest of the paper is structured as follows.
    In \Cref{sec:tools} we introduce some tools. 
    In \Cref{sec:PollutedBootstrap} we prove our polluted graph bootstrap percolation result. In \Cref{sec:ProofMainResult} we proceed to the proof of our main result.
    Finally, in \Cref{sec:Conclusion}, we give a few concluding remarks.

    \brssixthaug

    \section{Tools from geometry and graph theory}
    \label{sec:tools}
Graph bootstrap percolation was introduced over 50 years ago by Bollob\'as \cite{bollobas1968weakly} as a monotone case of cellular automata, introduced a few years earlier by von Neumann \cite{neumann1966theory} after a suggestion of Ulam \cite{ulam1952random}. For a given graph \(H\), the \(H\)-\emph{bootstrap percolation process} is defined as follows. Given an initial set of edges \(G\subset E\left(K_n\right)\), and graph \(H\), we set \(G_0=G\), and thereafter define
\[G_{t+1}=G_t\cup\left\{e\in E\left(K_n\right):\exists K \cong H \mbox{ s.t. } e \in E(K)\subset G_t \cup \{e\}\right\}.\]
 We then define \(\langle G\rangle_H=\cup_tG_t\) to be the closure of \(G\) under this process. This is to say, \(\langle G\rangle_H\) is the graph obtained by adding an edge wherever doing so would create a new copy of \(H\), and repeating this on the resulting graph until no new edges are added. A graph \(G\) is said to $H$-\emph{percolate} if \(\langle G\rangle_H=E(K_n)\).

    \begin{figure}[H]
		\centering
		\begin{tikzpicture}[scale=0.65]
		\filldraw[black] (-6,1.1547) circle (2pt);
		\filldraw[black] (-6,-1.1547) circle (2pt);
		\filldraw[black] (-4,2.3094) circle (2pt);
		\filldraw[black] (-4,-2.3094) circle (2pt);
		\filldraw[black] (-2,1.1547) circle (2pt);
		\filldraw[black] (-2,-1.1547) circle (2pt);
		\filldraw[black] (0,1.1547) circle (2pt);
		\filldraw[black] (0,-1.1547) circle (2pt);
		\filldraw[black] (2,2.3094) circle (2pt);
		\filldraw[black] (2,-2.3094) circle (2pt);
		\filldraw[black] (4,1.1547) circle (2pt);
		\filldraw[black] (4,-1.1547) circle (2pt);
		\filldraw[black] (6,1.1547) circle (2pt);
		\filldraw[black] (6,-1.1547) circle (2pt);
		\filldraw[black] (8,2.3094) circle (2pt);
		\filldraw[black] (8,-2.3094) circle (2pt);
		\filldraw[black] (10,1.1547) circle (2pt);
		\filldraw[black] (10,-1.1547) circle (2pt);
		\filldraw[black] (12,1.1547) circle (2pt);
		\filldraw[black] (12,-1.1547) circle (2pt);
		\filldraw[black] (14,2.3094) circle (2pt);
		\filldraw[black] (14,-2.3094) circle (2pt);
		\filldraw[black] (16,1.1547) circle (2pt);
		\filldraw[black] (16,-1.1547) circle (2pt);
        \node at (-4,-3) {\(G=G_0\)};
        \draw [->](-1.5,0) -- (-0.5,0);
        \node at (2,-3) {\(G_1\)};
        \draw [->](4.5,0) -- (5.5,0);
        \node at (8,-3) {\(G_2\)};
        \draw [->](10.5,0) -- (11.5,0);
        \node at (14,-3) {\(G_3=\langle G\rangle_{K_4}=K_6\)};
        
		\draw (-4,2.3094) -- (-6,1.1547) -- (-6,-1.1547) -- (-4,-2.3094) -- (-2,-1.1547) -- (-2,1.1547) -- (-4,2.3094) -- (-6,-1.1547) -- (-2,-1.1547);
        \draw (-6,1.1547) -- (-4,-2.3094);
  
		\draw (2,2.3094) -- (0,1.1547) -- (0,-1.1547) -- (2,-2.3094) -- (4,-1.1547) -- (4,1.1547) -- (2,2.3094) -- (0,-1.1547) -- (4,-1.1547);
        \draw (0,1.1547) -- (2,-2.3094);
        \draw[red] (2,2.3094) -- (2,-2.3094);
        \draw[red] (0,1.1547) -- (4,-1.1547);

        \draw (6,1.1547) -- (8,-2.3094) -- (8,2.3094) -- (6,1.1547) -- (6,-1.1547) -- (8,-2.3094) -- (10,-1.1547) -- (10,1.1547) -- (8,2.3094) -- (6,-1.1547) -- (10,-1.1547) -- (6,1.1547);
        \draw[red] (8,2.3094) -- (10,-1.1547);

        \draw (16,-1.1547) -- (12,1.1547) -- (14,-2.3094) -- (14,2.3094) -- (12,1.1547) -- (12,-1.1547) -- (14,-2.3094) -- (16,-1.1547) -- (16,1.1547) -- (14,2.3094) -- (12,-1.1547) -- (16,-1.1547) -- (14,2.3094);
        \draw[red] (14,-2.3094) -- (16,1.1547) -- (12,1.1547);
        \draw[red] (16,1.1547) -- (12,-1.1547);

				\end{tikzpicture}
		\caption{The process of \(K_4\)-bootstrap percolation for the graph \(G_0\), which \(K_4\)-percolates.}
    \end{figure}
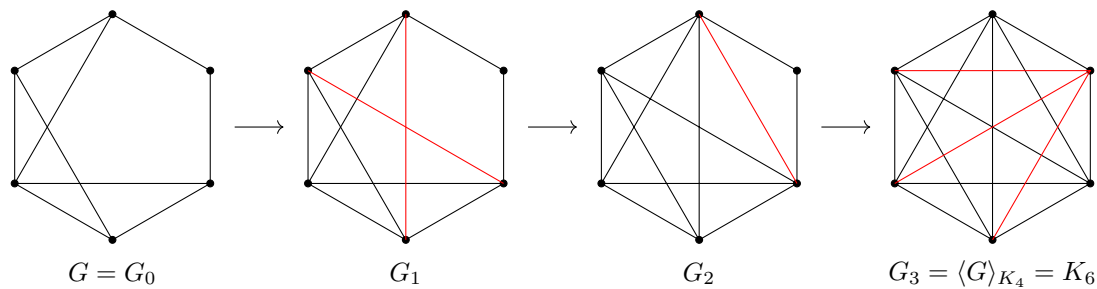
    
    Balogh, Bollob\'as, and Morris \cite{balogh_graph_2012} studied the critical threshold for \(G(n,p)\) to \(H\)-percolate. More precisely, we define the \textit{critical threshold} to be
	\[p_c(n,H):=\inf\left\{p:\mathbb{P}\left(G\sim G(n,p)\text{ }H\text{-percolates}\right)\geq\frac12\right\}.\]
 
	Balogh, Bollob\'as, and Morris determined the critical threshold for $K_{d+3}$-percolation up to a polylogarithmic factor \cite{balogh_graph_2012}.
	
	\begin{theorem}
 \label{thm:Balogh}
		For \(d\geq 1\) and sufficiently large \(n\), there exists \(c = c(d) > 0\) such that \begin{equation}
		\frac{n^{-1/\eta(d)}}{c\log{n}} \leq p_c\left(n,K_{d+3}\right)\leq n^{-1/\eta(d)}\log n. \end{equation} 
	\end{theorem}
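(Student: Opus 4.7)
My plan is to prove the upper and lower bounds separately, using the standard toolkit of graph bootstrap percolation.

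For the upper bound, the goal is to exhibit an explicit small graph $F$ with the two properties that (i) the $K_{d+3}$-percolation closure of any $G \subseteq K_n$ that contains $F$ is close to $K_n$, and (ii) the relevant $2$-density $m_2(F) := \max_{F' \subseteq F,\, v(F') \geq 3} \frac{e(F')-1}{v(F')-2}$ matches $\eta(d)$. A natural candidate is a recursive construction: start with a single copy of $K_{d+3}$ minus an edge, then repeatedly attach new ``almost-$K_{d+3}$'' pieces that share $d+2$ vertices (including the freshly derived edge) with the existing structure, each attachment contributing $d+1$ new vertices and $\binom{d+3}{2}-2$ new edges. Iterating this produces a subgraph whose asymptotic density ratio is exactly $\eta(d)$. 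An application of Janson's inequality then shows that such a seed appears in $G(n,p)$ whp as soon as $p \gg n^{-1/\eta(d)}$, and a sprinkling step on a second independent copy of $G(n,p')$ (with $p'$ absorbed into the $\log n$ slack) is used to propagate percolation from the seed to all of $V(K_n)$.

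For the lower bound, I would use a first-moment argument on the graphs that \emph{witness} percolation. The key structural input is that whenever an edge $e$ is added during $K_{d+3}$-percolation from $G$, there is a ``witness subgraph'' $W \subseteq G \cup \{e\}$ containing $e$ such that every sub-substructure of $W$ has density at least $\eta(d)$ in the sense of $(e(W')-2)/(v(W')-2) \geq \eta(d)$. This follows inductively by unrolling the sequence of percolation steps producing $e$, merging along shared vertices and edges, and observing that every single-step expansion contributes at most $d+1$ new vertices and at least $\binom{d+3}{2}-2$ new ``paid for'' edges. Enumerating rooted witnesses and summing their expected counts in $G(n,p)$ then yields a $o(1)$ bound on the probability of percolation once $p \leq n^{-1/\eta(d)}/(c\log n)$ for a suitable constant $c = c(d)$.

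The main obstacle, in my view, is the structural lemma for the lower bound: one has to rule out the possibility of a ``cheaper'' witness by showing that merging across multiple percolation steps cannot reduce the density ratio below $\eta(d)$. This is a genuinely combinatorial statement, and it is the place where the specific exponent $\eta(d) = \frac{\binom{d+3}{2}-2}{d+1}$ is forced. For the upper bound, the construction of the seed is comparatively clean, but verifying that its closure reaches all of $K_n$ (rather than only a subgraph) requires the sprinkling step and accounts for the polylogarithmic gap between the two bounds.
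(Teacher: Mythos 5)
This theorem is not proved in the present paper: it is a direct citation of Balogh, Bollob\'as, and Morris \cite{balogh_graph_2012}. The paper only imports one ingredient of their argument (Lemma~\ref{bbmconditions}, also quoted without proof) and adapts it to the polluted setting, so there is no in-paper proof to compare against. I will therefore assess your sketch against the known argument of \cite{balogh_graph_2012}.

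Your outline is in the right spirit, but the description of the seed graph contains an internal contradiction. You say the attached almost-$K_{d+3}$ pieces ``share $d+2$ vertices (including the freshly derived edge)'' with the existing structure, yet also ``contribute $d+1$ new vertices and $\binom{d+3}{2}-2$ new edges.'' Since each piece has only $d+3$ vertices, sharing $d+2$ of them would leave exactly one new vertex, not $d+1$, and the new-edge count would not be $\binom{d+3}{2}-2$ either. The graph $H_r$ used in \cite{balogh_graph_2012} (and recalled in Section~\ref{sec:PollutedBootstrap} of this paper) attaches each new piece by identifying \emph{only the two endpoints of the freshly derived edge} $e_i$ with the previous piece; with that fix your vertex/edge counts and the resulting $2$-density $\eta(d)$ are correct.

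There is also a structural mismatch in your upper-bound plan. A single copy of $H_r$ rooted at an edge $e$ certifies only that $e$ enters the closure; its own $K_{d+3}$-closure is a small graph, not most of $K_n$. What \cite{balogh_graph_2012} actually does is show, by a second-moment/Chebyshev computation for the count $X_r(e)$ of rooted copies (not Janson), that $X_r(e)>0$ w.h.p.\ for each fixed edge $e$; Markov's inequality then shows all but $o(n^2)$ edges lie in the closure, giving a clique on $(1-o(1))n$ vertices (exactly the argument reproduced in \Cref{thm:PollutedBootstrapPercolation}), and a cleanup step extends this to all of $K_n$. Your ``one seed plus sprinkling'' framing skips the intermediate almost-spanning clique, which is where the core probabilistic work lives. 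For the lower bound your instinct (first moment over witnesses, with the difficulty concentrated in controlling density under merges) is the right one, but the invariant as stated --- that \emph{every} subgraph $W'$ of a witness satisfies $(e(W')-2)/(v(W')-2)\geq\eta(d)$ --- is too strong: an arbitrary induced subgraph of $W$ can be very sparse. The correct statement, proved recursively in \cite{balogh_graph_2012}, bounds the density of the witness as assembled by the percolation recursion, not of all of its subgraphs.
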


	They further remarked \cite{balogh_graph_2012} that by a general result of Bollob\'as and Thomason \cite{bollobas_threshold_1987} this is a threshold, i.e. for \(p\gg p_c(n,H)\), the probability of percolation is \(1-o(1)\), and for \(p\ll p_c(n,H)\), the probability of percolation is \(o(1)\). We will not use \Cref{thm:Balogh} directly, but we will use the proof method of Balogh, Bollob\'as, and Morris
 to deduce our polluted graph bootstrap percolation result in \Cref{thm:PollutedBootstrapPercolation}.
	
	\brssixthaug

    We will need the two following geometric results. Their proofs are quite classical, and therefore deferred to \Cref{appendix}.
	
	\begin{lemma}
    \label{lem:GeometryFact}
        Given all pairwise distances within some \(S=\{v_1,\dots,v_r\}\subset\mathbb{R}^d\), we can reconstruct the configuration of \(S\) up to isometry.
	\end{lemma}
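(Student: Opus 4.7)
The plan is to pin $v_1$ at the origin and recover the vectors $w_i := v_i - v_1$ for $i = 2,\dots,r$ up to an orthogonal transformation of $\mathbb{R}^d$; this suffices because translations together with orthogonal maps generate $\mathrm{Isom}(\mathbb{R}^d)$.

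First I would observe that the polarization identity
$$\langle w_i, w_j\rangle \;=\; \tfrac{1}{2}\!\left(d(v_1,v_i)^2 + d(v_1,v_j)^2 - d(v_i,v_j)^2\right)$$
expresses every entry of the Gram matrix $G := (\langle w_i, w_j\rangle)_{i,j=2}^{r}$ as a known function of the input distances. In particular $G$ is a positive semidefinite $(r-1)\times(r-1)$ matrix of rank $m \leq d$, where $m$ is the dimension of the affine span of $S$, and it is fully computable from the data we are given.

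Next, using any convenient factorization (say a spectral one, $G = U\Lambda U^T$ with $\Lambda$ diagonal and non-negative), I would compute $W \in \mathbb{R}^{(r-1)\times m}$ with $G = WW^T$ and take the rows of $W$ as candidate vectors $\tilde w_i \in \mathbb{R}^m \hookrightarrow \mathbb{R}^d$. These satisfy $\langle \tilde w_i, \tilde w_j\rangle = G_{ij} = \langle w_i, w_j\rangle$ for all $i,j$, and hence $\|\tilde w_i - \tilde w_j\| = d(v_i, v_j)$, so the candidate configuration $\{0,\tilde w_2,\dots,\tilde w_r\}$ has exactly the prescribed distance matrix.

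The main step --- and the only point where any real care is needed --- is to verify that having equal Gram matrices actually forces the two families of vectors to differ by a rigid motion in $\mathbb{R}^d$. This is the standard linear-algebra fact that if $WW^T = \widetilde{W}\widetilde{W}^T$ for two matrices with rows in $\mathbb{R}^d$, then there exists $O \in O(d)$ with $W = \widetilde{W} O$; one proves it by sending an orthonormal basis of the row span of one matrix to a matching one of the other (obtained from comparing the SVDs) and extending orthogonally on the complementary subspace. Applying this to the true $w_i$ and the reconstructed $\tilde w_i$ produces an isometry from $S$ to the candidate configuration, completing the reconstruction.
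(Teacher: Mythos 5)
Your proof is correct and follows essentially the same approach as the paper: translate to fix one point at the origin, recover the Gram matrix of the difference vectors via the polarization identity, and invoke the fact that a Gram matrix determines a configuration up to orthogonal transformation. The only cosmetic differences are that you base at $v_1$ rather than $v_r$ and that you sketch a proof of the Gram-matrix rigidity fact (via the $WW^T = \widetilde W\widetilde W^T \Rightarrow W = \widetilde W O$ argument) where the paper simply cites Horn and Johnson.
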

	
	\begin{corollary}
    \label{cor:GeometryFact}
		Suppose a multiset \(R\) of \(d+3\) points of \(\mathbb{R}^m\) \((m\geq d\)) lies in a \(d\)-dimensional subspace. Then, given all pairwise distances within \(R\) except some pair \(uv\), it is possible to identify whether \(\newT=R\setminus\{u,v\}\) is \(d\)-independent. If $Q$ is $d$-independent, the distance \(uv\) can also be determined.
	\end{corollary}

    We will also employ a classical ``supersaturation'' result due to Erd\H{o}s and Simonovits \cite{supersaturation}. Recall that Tur\'an's theorem says that if $G$ is a graph with more than $\frac{\ell -2}{2(\ell -1)}n^2$ edges then it contains a copy of $K_{\ell}$. This theorem says that we in fact have many copies of $K_{\ell}$ if the density is slightly higher.
	
	\begin{theorem}\label{supersat}
		\label{thm:CliqueDensity}
		If $\ell \geq 3$ and $\gamma > \frac{\ell-2}{2(\ell-1)}$, then every graph on $n$ vertices and at least $\gamma n^2$ edges contains $\Omega_{\gamma}(n^\ell)$ many $\ell$-cliques.
	\end{theorem}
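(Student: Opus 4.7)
The plan is to prove this by a short random-sampling argument on top of Tur\'an's theorem. Set $\varepsilon := \gamma - \frac{\ell-2}{2(\ell-1)} > 0$ and choose an integer sample size $s = s(\ell,\gamma)$, large but fixed (to be pinned down by the Markov calculation below).

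The first step is to pick $S \subseteq V(G)$ uniformly among all $s$-subsets and study $X := e(G[S])$. By linearity of expectation,
\[\mathbb{E}[X] \;=\; \binom{s}{2}\cdot\frac{e(G)}{\binom{n}{2}} \;\geq\; \binom{s}{2}\cdot\frac{2\gamma n^{2}}{n(n-1)} \;\geq\; \left(\tfrac{\ell-2}{\ell-1}+\varepsilon'\right)\binom{s}{2}\]
for some $\varepsilon' = \varepsilon'(\varepsilon)>0$ and all $n$ large. Writing $Y := \binom{s}{2} - X \geq 0$, Markov's inequality gives an upper bound on $\mathbb{P}(Y \geq \tau)$, and choosing $\tau$ a hair below $\frac{1}{\ell-1}\binom{s}{2}$ yields a constant $\delta = \delta(\ell,\gamma) > 0$ such that
\[\mathbb{P}\!\left(X > \left(1-\tfrac{1}{\ell-1}\right)\binom{s}{2}\right) \;\geq\; \delta.\]
For any such $S$, the induced subgraph $G[S]$ strictly exceeds the Tur\'an number $\mathrm{ex}(s,K_\ell)$, so by Tur\'an's theorem it contains a copy of $K_\ell$.

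The final step is a standard double count. At least $\delta\binom{n}{s}$ of the $s$-subsets contain a $K_\ell$, and each $K_\ell$ in $G$ is contained in exactly $\binom{n-\ell}{s-\ell}$ such subsets, so the number $N$ of $\ell$-cliques in $G$ satisfies
\[N \;\geq\; \delta\,\frac{\binom{n}{s}}{\binom{n-\ell}{s-\ell}} \;=\; \Omega_{\ell,\gamma}\!\left(n^{\ell}\right),\]
as desired.

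The only genuine subtlety is calibrating $s$ and the Markov threshold $\tau$ so that the fraction $\delta$ of $s$-subsets exceeding the Tur\'an bound is bounded below by a constant depending only on $\ell$ and $\gamma$; this is routine once $\varepsilon > 0$ is fixed, since $X$ lives in $[0,\binom{s}{2}]$ and the expectation already exceeds the Tur\'an density by a positive additive amount. Small-$n$ cases are absorbed into the implicit constant in $\Omega_{\gamma}(\cdot)$.
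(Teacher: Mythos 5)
The paper does not prove this theorem; it cites it to Erd\H{o}s and Simonovits \cite{supersaturation}, so there is no in-paper argument to compare against. Your random-sampling-plus-Tur\'an argument is a correct and entirely standard proof of the stated clique supersaturation, and the final double count $N \geq \delta\binom{n}{s}/\binom{n-\ell}{s-\ell} = \Omega_{\ell,\gamma}(n^\ell)$ is exactly right.

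One detail worth making explicit when you ``pin down'' $s$: the event $X > \bigl(1-\tfrac{1}{\ell-1}\bigr)\binom{s}{2}$ by itself does \emph{not} quite imply $e(G[S]) > \mathrm{ex}(s,K_\ell)$, since $\mathrm{ex}(s,K_\ell)$ equals $\bigl(1-\tfrac{1}{\ell-1}\bigr)\tfrac{s^2}{2}$ when $(\ell-1)\mid s$, which exceeds $\bigl(1-\tfrac{1}{\ell-1}\bigr)\binom{s}{2}$ by $\Theta(s)$. Your Markov step actually gives the slightly stronger conclusion $X \geq \bigl(1-\tfrac{1}{\ell-1}+\Theta(\varepsilon')\bigr)\binom{s}{2}$ with probability at least $\delta$, and once $s$ is taken larger than roughly $2(\ell-2)/\bigl(\varepsilon'(\ell-1)\bigr)$ this does beat $\mathrm{ex}(s,K_\ell)$. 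That is precisely the constraint that fixes $s$ in terms of $\ell$ and $\gamma$; with it stated, the argument is complete. Your closing remark that small $n$ is absorbed into the implicit constant is also justified, since the hypothesis already forces $n \geq \ell$ and, for any such $n$, Tur\'an's theorem directly applied to $G$ gives at least one $K_\ell$.
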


 Additionally, we will make use of one more classical result, on the connectivity of random graphs due to Erd\H{o}s and R{\'e}nyi \cite{erdHos1960evolution} (see also Theorem 7.3 of the book by Bollob\'as \cite{bollobas1998random} for a sharp result).

\begin{theorem}
\label{thm:ConnectivityErdosRenyi}
    Let $\varepsilon>0$ be fixed and $p \geq (1+\varepsilon)\frac{\log n}{n}$. Then {\whp}\ the random graph $G(n,p)$ is connected.
\end{theorem}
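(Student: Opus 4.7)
My plan is to apply the first moment method to the random variables $X_k$, defined as the number of connected components of $G \sim G(n,p)$ of size exactly $k$. Since a disconnected graph has at least one component of size at most $n/2$ (take the smallest side), a union bound yields
\[
\mathbb{P}(G \text{ disconnected}) \leq \sum_{k=1}^{\lfloor n/2 \rfloor} \mathbb{E}[X_k].
\]
For each $k$, Cayley's formula provides the classical upper bound
\[
\mathbb{E}[X_k] \leq \binom{n}{k}\, k^{k-2}\, p^{k-1}\, (1-p)^{k(n-k)},
\]
obtained by choosing the $k$ vertices of the component, selecting one of at most $k^{k-2}$ spanning trees on them, requiring its $k-1$ edges to be present, and forbidding any edge from leaving the set.

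The dominant contribution comes from $k=1$: using $1-p \leq e^{-p}$ together with $p \geq (1+\varepsilon)\log n / n$,
\[
\mathbb{E}[X_1] = n(1-p)^{n-1} \leq n \exp\!\bigl(-(1+\varepsilon)(1-1/n)\log n\bigr) = n^{-\varepsilon + o(1)} = o(1).
\]
For $k \geq 2$, I would take logarithms of the Cayley-based bound, apply $\binom{n}{k} \leq (en/k)^k$, and use $pk(n-k) \geq (1+\varepsilon) k \log n /2$ (which holds since $k \leq n/2$). After routine simplification this yields
\[
\log \mathbb{E}[X_k] \leq \log n - \tfrac{1+\varepsilon}{2}\, k \log n + O(k \log\log n),
\]
so that $\mathbb{E}[X_k] \leq n^{1 - (1+\varepsilon)k/2 + o(1)}$.

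The main obstacle is then to verify that summing this geometric-type bound over $k = 2, \ldots, \lfloor n/2 \rfloor$ still gives $o(1)$. Since consecutive terms shrink by a factor of order $n^{-(1+\varepsilon)/2}$, the series is dominated by its first term, giving $\sum_{k \geq 2} \mathbb{E}[X_k] = O(\mathbb{E}[X_2]) = n^{-\varepsilon + o(1)} = o(1)$. Combining this with the $k = 1$ estimate yields $\mathbb{P}(G \text{ disconnected}) = o(1)$, which proves that $G(n,p)$ is connected with high probability.
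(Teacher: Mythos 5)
The paper does not prove \Cref{thm:ConnectivityErdosRenyi}; it cites it as a classical theorem of Erd\H{o}s and R\'enyi (with Frieze--Kar\'onski as a modern reference), so there is no in-paper proof to compare against. Your first-moment argument over component sizes, using Cayley's formula to bound the expected number of spanning trees of isolated components, is indeed the standard textbook route to this result, and the overall skeleton (dominant contribution from $k=1$, geometric decay for $k\ge 2$) is sound.

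There is, however, a real gap in the $k\ge 2$ estimate. Your stated bound
\[
\log \mathbb{E}[X_k] \le \log n - \tfrac{1+\varepsilon}{2}\,k\log n + O(k\log\log n)
\]
requires controlling the factor $p^{k-1}$ from \emph{above} by roughly $(\log n/n)^{k-1}$, i.e.\ it implicitly uses $p = O(\log n/n)$. But the hypothesis only gives a \emph{lower} bound $p\ge(1+\varepsilon)\log n/n$. If you instead discard $p^{k-1}\le 1$, the bound becomes $\log\mathbb{E}[X_k]\le k + \tfrac{1-\varepsilon}{2}k\log n - 2\log k$, which is large and increasing in $k$ --- so the sum over $k$ does not go to zero. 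The standard fix is to observe either that connectedness is a monotone increasing event, so one may take $p=(1+\varepsilon)\log n/n$ without loss of generality, or, equivalently, that the function $p\mapsto p^{k-1}(1-p)^{k(n-k)}$ is decreasing on $p\ge (1+\varepsilon)\log n/n$ for all $2\le k\le n/2$ and $n$ large, so its supremum over the allowed range is attained at the threshold. Once this observation is added, your calculation goes through and the proof is complete.
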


    \brssixthaug
	
    \section{Polluted graph bootstrap percolation}
    \label{sec:PollutedBootstrap}

    We define $K_{d+3}$-\textit{percolation with pollution} \(\mathscr{P}\subset X^{(d+1)}\), or \(\mathscr{P}\)-\textit{polluted} \(K_{d+3}\)-\textit{percolation} as follows: starting with a graph \(G\), we repeatedly add to the graph any edge \(e=uv\) where adding $e$ would create another copy of \(K_{d+3}\) in the graph, on vertices \(\newT\cup \{u,v \}\) for some \(\newT\in X^{(d+1)}\setminus\mathscr{P}\). More formally, given an initial set of edges \(G\subset X^{(2)}\), we set \(G_0=G\), and thereafter define
 \[G_{t+1}=G_t\cup\left\{e=uv\in X^{(2)}:\exists K \cong K_{d+3} \mbox{ s.t. } e \in E(K)\subset G_t \cup \{e\} \mbox{ and } V(K)\setminus \{u,v\} \notin \mathscr{P}\right\}.\]
	Then \(\langle G\rangle^{\mathscr{P}}_{K_{d+3}}=\cup_tG_t\) is the closure of \(G\) under this process.\footnote{Note that our definition of polluted graph bootstrap percolation is unrelated to the one of bootstrap
percolation in a polluted environment, which was introduced by Gravner and McDonald \cite{gravner1997bootstrap}.}

Throughout the paper, we let $p_{*}(n,d) = (\frac{\log n}{\log \log n})^{2/\eta(d)}n^{-1/\eta(d)}$. The main result of this section is the following.

\begin{theorem}
\label{thm:PollutedBootstrapPercolation}
    Let $0<\mu <1$, $\delta>0$, and let $\mathcal{H}$ be a \((d+1)\)-uniform hypergraph on $[n]$ such that $|E(\mathcal{H})| \leq n^{d+\mu}$.
    Let $G \sim G(n,p)$. If $p \gg p_{*}(n,d)$, then $\langle G \rangle^{\mathcal{H}}_{K_{d+3}}$ contains a clique of size $(1-\delta)n$ {\whp}
\end{theorem}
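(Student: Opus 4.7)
The plan is to adapt the argument of Balogh, Bollobás, and Morris that underlies the upper bound in Theorem \ref{thm:Balogh} to the polluted setting, exploiting the fact that $|E(\mathcal{H})| \leq n^{d+\mu}$ is a vanishing fraction of the $\Theta(n^{d+1})$ available $(d+1)$-subsets. In particular, for $G \sim G(n,p)$ the expected number of polluted $(d+1)$-cliques in $G$ is at most $n^{d+\mu} \cdot p^{\binom{d+1}{2}}$ while the expected total is $\Theta(n^{d+1} \cdot p^{\binom{d+1}{2}})$, so the proportion of polluted $(d+1)$-cliques is $O(n^{\mu-1}) = o(1)$ uniformly in $p$.

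First I would decouple the randomness via a two-round exposure: write $G \sim G(n,p)$ as the union of independent $G_1 \sim G(n,p_1)$ and $G_2 \sim G(n,p_2)$ with $p_1, p_2 \gg p_*(n,d)$. I would then use $G_1$ to build a large near-clique via a polluted variant of the BBM expansion: at every step, when certifying a new edge $uv$, one requires the $(d+1)$-clique $T$ used as a base to additionally satisfy $T \notin E(\mathcal{H})$. By the density observation above, in each local first- or second-moment estimate the number of admissible bases drops only by a $(1-o(1))$ factor, so the thresholds and concentration bounds used by BBM go through unchanged. This would yield with high probability a subset $U \subseteq [n]$ of size at least $(1-\delta/2)n$ on which $\langle G_1 \rangle^{\mathcal{H}}_{K_{d+3}}$ contains a clique.

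The main technical obstacle is verifying that the $(1-o(1))$ losses from pollution across the polynomially many moment estimates in the BBM argument do not aggregate beyond the polylogarithmic slack built into $p_*(n,d)$ over the pure threshold $n^{-1/\eta(d)}$. This should be manageable since each polluted count is at most $n^{\mu-1}$ times the unpolluted count, so the cumulative error is $O(n^{-(1-\mu)+o(1)})$, comfortably absorbed. Finally, I would use the second sprinkle $G_2$ to attach almost all remaining vertices to $U$: for any $v \notin U$, with high probability there are many non-polluted $(d+1)$-cliques inside $U$ joined to $v$ through $G_2$-edges, which triggers polluted percolation of every edge from $v$ into $U$ and produces the desired clique of size at least $(1-\delta)n$.
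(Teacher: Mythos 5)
You have the right conceptual picture for the first part: pollution removes only a vanishing fraction of $(d+1)$-sets, so for a single copy of $H$ the admissible bases drop by a factor $1-O(n^{\mu-1})$, and over the $r=O(\log n/\log\log n)$ copies in $H_r$ the cumulative loss $(1-O(n^{\mu-1}))^r$ remains bounded away from $0$ (in fact $1-O(n^{\mu-1+o(1)})$, as you say). This is exactly how the paper modifies the Balogh--Bollob\'as--Morris second-moment computation, via Lemma \ref{lem:PollutedBootstrapPercolation}, whose extra hypotheses $r(d+1)\le n/2$ and $rn^{\mu-1}\le c$ formalize your estimate.

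The genuine gap is in how you pass from the moment estimate to an actual clique. The BBM-type argument, Chebyshev, and Markov give you that $\langle G\rangle^{\mathcal H}_{K_{d+3}}$ contains all but $o(n^2)$ (indeed at most $\delta^2n^2/2$, whp) of the $\binom n2$ possible edges. But a graph on $n$ vertices missing $o(n^2)$ edges does not automatically contain a clique on $(1-\delta/2)n$ vertices; you simply assert that the first round ``would yield'' such a $U$ without justifying this step, and it is precisely here that the paper does real work. The paper defines $D=\{x: d_{G'}(x)>(1-\delta)n\}$, shows $|D|\ge(1-\delta)n$ by double-counting, and then proves $D$ is a clique in $G'$ by contradiction: if $x,y\in D$ were non-adjacent, $N_{G'}(x)\cap N_{G'}(y)$ would have size $\ge(1-2\delta)n$ and edge density close to $1$, so by the Erd\H os--Simonovits supersaturation theorem (\Cref{thm:CliqueDensity}) it contains $\Omega(n^{d+1})$ cliques of size $d+1$; since $|E(\mathcal H)|\le n^{d+\mu}=o(n^{d+1})$, one of these is clean, which lets the polluted percolation add the edge $xy$, a contradiction. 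This supersaturation step is the second place where the hypothesis $|E(\mathcal{H})|\le n^{d+\mu}$ is used, and your sketch never invokes it.

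Your two-round exposure and the sprinkle with $G_2$ are also superfluous and don't fill this gap: once $U$ has size $(1-\delta/2)n>(1-\delta)n$ there is nothing left to attach, and if instead $U$ were small you could not attach vertices to it without first having the large clean clique whose existence is the whole point. The paper's proof is one-shot.
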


Before proving \Cref{thm:PollutedBootstrapPercolation}, we introduce a little more notation.
Following \cite{balogh_graph_2012} for $H=K_{d+3}$, define $H_r$ as follows:
Let \(H^{(1)},\dots,H^{(r)}\) be copies of \(H\), and choose edges \(e_1=u_1v_1,\dots,e_r=u_rv_r\) of \(H\) such that for each \(i\), the edges \(e_i,e_{i+1}\) share no endpoints. Then for each \(i=2,\dots,r\), remove the edge \(e_i\) from \(H^{(i-1)}, H^{(i)}\), and identify its endpoints in \(H^{(i-1)}\) with those in \(H^{(i)}\). Finally, remove \(e_1\) from \(H^{(1)}\). The edge \(e_1\) is said to be the \textit{root} of \(H_r\), written \(\Hroot(H_r)=e_1\). Additionally, for each $i\leq r$, we call $V(H^{(i)}) \setminus \{u_i,v_i\}$ the \emph{base} of $H^{(i)}$.
	
	\begin{figure}[H]
		\centering
		\begin{tikzpicture}[decoration={calligraphic brace, amplitude=6pt,raise=2pt,mirror}]
		\filldraw[black] (0,1.1547) circle (2pt);
		\filldraw[black] (0,-1.1547) circle (2pt);
		\filldraw[black] (2,2.3094) circle (2pt);
		\filldraw[black] (2,-2.3094) circle (2pt);
		\filldraw[black] (4,1.1547) circle (2pt);
		\filldraw[black] (4,-1.1547) circle (2pt);
		\filldraw[black] (6,2.3094) circle (2pt);
		\filldraw[black] (6,-2.3094) circle (2pt);
		\filldraw[black] (8,1.1547) circle (2pt);
		\filldraw[black] (8,-1.1547) circle (2pt);
		\filldraw[black] (10,2.3094) circle (2pt);
		\filldraw[black] (10,-2.3094) circle (2pt);
		\filldraw[black] (12,1.1547) circle (2pt);
		\filldraw[black] (12,-1.1547) circle (2pt);
		\draw (0,1.1547) -- (12,1.1547)  -- (6,-2.3094) -- (0,1.1547);
		\draw (0,-1.1547) -- (12,-1.1547)  -- (6,2.3094) -- (0,-1.1547);
		\draw (2,2.3094) -- (2,-2.3094) -- (10,2.3094) -- (10,-2.3094) -- (2,2.3094);
		\draw (0,1.1547) -- (2,2.3094) -- (0,-1.1547) -- (2,-2.3094) -- (0,1.1547);
		\draw (2,2.3094) -- (4,-1.1547) -- (6,2.3094) -- (8,-1.1547) -- (10,2.3094) -- (12,-1.1547) -- (12,1.1547) -- (10,-2.3094) -- (8,1.1547) -- (6,-2.3094) -- (4,1.1547) -- (2,-2.3094);
		\draw (6,2.3094) -- (6,-2.3094);
		
		\draw (10,2.3094) -- (12,1.1547);
		\draw (10,-2.3094) -- (12,-1.1547);

		\draw[red, dashed] (0,-1.1547) -- node[left] {\(e_1\)} ++(0,2.3094);
		\draw[red, dashed] (4,-1.1547) -- node[left] {\(e_2\)} ++(0,2.3094);
		\draw[red, dashed] (8,-1.1547) -- node[left] {\(e_3\)} ++(0,2.3094);	

        \draw [decorate,very thick]
            (4,2.5) -- (0,2.5) node [black,midway,yshift=0.5cm] {$H^{(1)}$};
        \draw [decorate,very thick]    
            (8,2.5) -- (4,2.5) node [black,midway,yshift=0.5cm] {$H^{(2)}$};
        \draw [decorate,very thick]
            (12,2.5) -- (8,2.5) node [black,midway,yshift=0.5cm] {$H^{(3)}$};
		
				\end{tikzpicture}
		\caption{\(H_3\), for \(H=K_6\). Removed edges are shown in dashed red.}
	\end{figure}
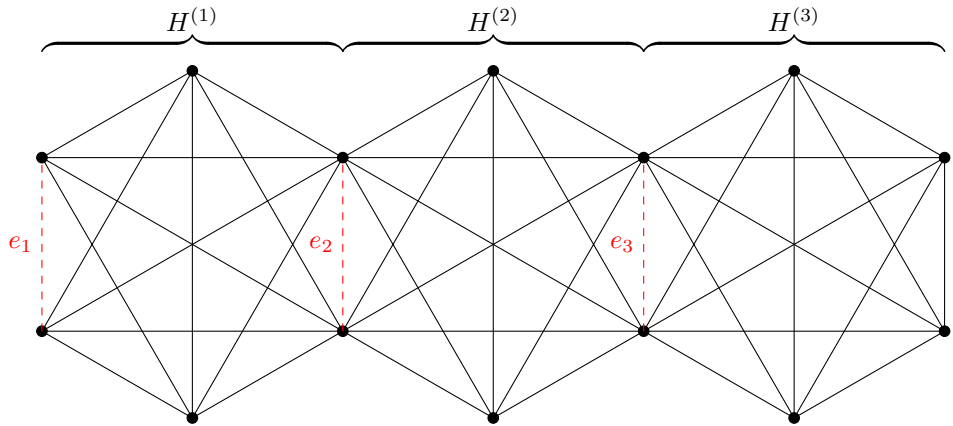

	We say \(H_r\) is $\mathcal{H}$-\textit{clean} if for each \(1\leq i\leq r\), \(V\left(H^{(i)}\right)\setminus \{u_i,v_i\}\notin \mathcal{H}\). Notice that in this case, by \Cref{cor:GeometryFact} we can percolate with pollution \(\mathcal{H}\) on \(H_r\) to obtain edges \(e_r,\dots,e_1\) in that order. Now we define \(X_r(e)\) to be the number of embeddings of \(H_r\) in \(G\sim G(n,p)\) rooted at \(e\), and \(X^{\mathcal{H}}_r(e)\) to be the number of $\mathcal{H}$-clean embeddings of \(H_r\) in \(G\) rooted at \(e\). Notice that if \(X^{\mathcal{H}}_r(e)>0\), then \(e\in\langle G\rangle^{\mathcal{H}}_{K_{d+3}}\). Note that we are counting different embeddings of the graph \(H_r\) as distinct, e.g., a $K_3$ contains six embeddings of a $K_3$.
 
    Balogh, Bollob\'as, and Morris \cite{balogh_graph_2012} proved the following key lemma towards their goal of proving \Cref{thm:Balogh}.
	
	\begin{lemma} 
 Let \(p=p(n), r=r(n), \omega=\omega(n)\) be chosen such that
		\begin{enumerate}[label=(\alph*)]
			\item \(p^{\eta(d)}n\geq(d+3)\omega r,\)
			\item \(\omega^{(d+1)r}\geq n,\)
			\item \(pn\to\infty\) as \(n\to\infty,\) and
			\item \(v(H_r)^{-2}p^{\eta(d)}n\to\infty\) as \(n\to\infty\).
		\end{enumerate}
	Then for $e \in [n]^{(2)}$ and $G \sim G(n,p)$, we have \[\mathbb{E}\left(X_r(e)\right)\to\infty \qquad \text{and} \qquad \frac{\Var\left(X_r(e)\right)}{\mathbb{E}\left(X_r(e)\right)^2} <\left(\frac{p^{\eta(d)}n}{4v(H_r)^2}-1\right)^{-1}\to0\]
 as \(n\to\infty\).\label{bbmconditions}
	\end{lemma}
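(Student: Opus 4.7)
The plan is to apply the second moment method to $X_r(e)$, following Balogh, Bollob\'as, and Morris.

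For the expectation, unrolling the construction of $H_r$ gives $v(H_r) = r(d+1) + 2$ and, via $\binom{d+3}{2} - 2 = (d+1)\eta(d)$, also $e(H_r) = r(d+1)\eta(d) + 1$. Since the number of rooted injective embeddings of $H_r$ into $K_n$ is $\Theta(n^{r(d+1)})$, this gives
\[\mathbb{E}(X_r(e)) = \Theta\!\left(p \cdot (np^{\eta(d)})^{r(d+1)}\right).\]
Hypothesis (a) gives $np^{\eta(d)} \geq \omega$; raising to the $r(d+1)$-th power and applying (b) yields $(np^{\eta(d)})^{r(d+1)} \geq n$, so (c) then implies $\mathbb{E}(X_r(e)) \geq \Omega(pn) \to \infty$.

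For the variance, I would write $X_r(e) = \sum_F I_F$ with $I_F = \mathbf{1}[F \subseteq G]$ ranging over rooted embeddings. Non-trivial covariances require $E(F_1) \cap E(F_2) \neq \emptyset$, and a routine expansion gives
\[
\frac{\Var(X_r(e))}{\mathbb{E}(X_r(e))^2} \leq \frac{1}{N^2}\sum_{\substack{F_1,F_2 \\ E(F_1) \cap E(F_2) \neq \emptyset}} p^{-e(F_1 \cap F_2)},
\]
where $N$ is the number of rooted embeddings. One then groups pairs by overlap pattern $(S_1, S_2, \phi)$, with $S_i \subseteq H_r$ containing both root vertices and $\phi : S_1 \to S_2$ a label-preserving isomorphism; for a pattern with $v(S_i) = k + 2$, the number of realisations in $K_n$ is at most $n^{2v(H_r) - k - 4}$, contributing at most $n^{-k}\,p^{-e(S_1)}$ to the sum above.

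The key structural input, extracted from the iterated $K_{d+3}$-glueing construction of $H_r$, is that for each $k \geq 1$ the total contribution of overlap patterns with $v(S_i) = k+2$ is at most $\left(4v(H_r)^2 / (np^{\eta(d)})\right)^k$: heuristically, each additional overlap vertex forces, on average, at least $\eta(d)$ new shared edges through the chain structure of $H_r$, while admitting at most $\sim v(H_r)^2$ positional choices. Summing the geometric series then yields
\[
\frac{\Var(X_r(e))}{\mathbb{E}(X_r(e))^2} \leq \sum_{k\geq 1}\left(\frac{4v(H_r)^2}{np^{\eta(d)}}\right)^{k} = \left(\frac{np^{\eta(d)}}{4v(H_r)^2} - 1\right)^{-1},
\]
which tends to $0$ by (d). The main obstacle is establishing the per-$k$ bound $(4v(H_r)^2)^k$ on the overlap enumeration; this is the technical core of the argument, hinging on the observation that the ``edge-per-vertex'' density in $H_r$ inherited from each glued copy of $K_{d+3}$ is precisely $\eta(d)$. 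The expectation calculation, the covariance expansion, and the geometric summation are otherwise routine.
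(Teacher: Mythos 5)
Note first that the paper does not contain its own proof of this lemma: it is stated with the attribution ``Balogh, Bollob\'as, and Morris proved [the following result]'' and then invoked as a black box in the proof of \Cref{lem:PollutedBootstrapPercolation}. The proof lives in \cite{balogh_graph_2012}, so there is no in-paper argument to check you against. I can only compare your sketch to the known BBM structure.

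Your sketch has the right shape: a second moment method, with $v(H_r)=r(d+1)+2$, $e(H_r)=r(d+1)\eta(d)+1$ (correct, via $(d+1)\eta(d)=\binom{d+3}{2}-2$), and the chain $np^{\eta(d)}\geq\omega \Rightarrow (np^{\eta(d)})^{r(d+1)}\geq n \Rightarrow \mathbb{E}(X_r(e))\gtrsim pn\to\infty$ for the first conclusion. (One small caveat: since $r\to\infty$, the implicit constant in $N=\Theta(n^{r(d+1)})$ is itself a function of $n$; this can be repaired via condition (a), since $(n/2)p^{\eta(d)}\geq(d+3)\omega r/2\geq\omega$ still holds.) The variance-to-expectation-squared expansion in terms of $p^{-e(F_1\cap F_2)}$ is also correct.

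But you have a genuine gap exactly where the work is. To make the geometric series close, each overlap class indexed by $k$ extra shared vertices must simultaneously satisfy (i) an enumeration bound of order $(4v(H_r)^2)^k$ on overlap patterns, and (ii) the inequality $e(S_1)\geq\eta(d)k$, so that $p^{-e(S_1)}\leq p^{-\eta(d)k}$. Point (ii) is a \emph{balance} property of $H_r$ --- roughly that every subgraph $F\subseteq H_r$ containing both root vertices satisfies $e(F)\geq\eta(d)\,(v(F)-2)$ --- and establishing it is the main content of BBM's lemma; it does \emph{not} follow from the ``on average'' heuristic you state, since subgraphs of $H_r$ can be very lopsided across the glued copies $H^{(i)}$. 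Moreover you implicitly classify pairs $(F_1,F_2)$ by shared \emph{vertices}, but pairs that share vertices yet no edges contribute nothing to the variance and must be excluded; the accounting has to ensure that each retained overlap vertex is incident to a shared edge before the balance inequality can be applied. These structural facts about $H_r$ as a chain of glued $K_{d+3}$'s are the technical core, and they are not proved in your sketch. So the approach is right, but the proposal is incomplete precisely at the step that carries the proof.
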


	We prove that under additional conditions, this result also holds for the polluted version of \(X^{\mathcal{H}}_r(e)\), when $|E(\mathcal{H})|$ is not too large.
	
	\begin{lemma}
 \label{lem:PollutedBootstrapPercolation}
		Let $0<\mu <1$ be fixed such that $|E(\mathcal{H})| \leq n^{d+\mu}$ and let \(p=p(n), r=r(n), \omega=\omega(n)\) meet the conditions of Lemma \ref{bbmconditions}, as well as
		
		\begin{enumerate}[label=(\alph*)]
			\setcounter{enumi}{4}
			\item \(r(d+1)\leq\frac12n\), and
			\item  \(rn^{\mu-1}\leq c\) for some constant \(c\).
		\end{enumerate}
		Then there exists a constant $L_{d,c}$ depending only on $d$ and $c$ such that for $e \in [n]^{(2)}$ and $G \sim G(n,p)$, we have
		
		\[\mathbb{E}\left(X^{\mathcal{H}}_r(e)\right) \to\infty\qquad \text{and} \qquad \dfrac{\Var\left(X^{\mathcal{H}}_r(e)\right)}{\mathbb{E}\left(X^{\mathcal{H}}_r(e)\right)^2}< L_{d,c}\left(\frac{p^{\eta(d)}n}{4v(H_r)^2}-1\right)^{-1}\to0.\]
  as \(n\to\infty\).
	\end{lemma}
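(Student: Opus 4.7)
The plan is to mimic the second-moment argument of \cite{balogh_graph_2012} after showing that the clean count $X^{\mathcal{H}}_r(e)$ differs from $X_r(e)$ by at most a multiplicative constant depending on $d$ and $c$. Since Lemma~\ref{bbmconditions} already controls $\mathbb{E}[X_r(e)]$ and $\Var(X_r(e))/\mathbb{E}[X_r(e)]^2$, this will be enough.

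The first main step is to prove that $\mathbb{E}[X^{\mathcal{H}}_r(e)] \ge \lambda(d,c)\,\mathbb{E}[X_r(e)]$ for some constant $\lambda(d,c)>0$. I would count embeddings of $H_r$ rooted at $e$ one copy at a time: after placing $H^{(1)},\ldots,H^{(i-1)}$, one places $H^{(i)}$ by choosing an ordered $(d+1)$-tuple of unused vertices for its base. Condition (e) guarantees $n-r(d+1) \ge n/2$ unused vertices at every step, so the number of available ordered tuples is at least $(n/4)^{d+1}$ for $n$ large. Of these, at most $(d+1)!\,|E(\mathcal{H})| \le (d+1)!\,n^{d+\mu}$ are \emph{polluted} (their underlying set lies in $\mathcal{H}$), so the clean fraction at each step is at least $1 - C_d n^{\mu-1}$ with $C_d := 4^{d+1}(d+1)!$. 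Using $1-x \ge e^{-2x}$ for $x\le 1/2$ together with condition (f),
\[
\frac{\mathbb{E}[X^{\mathcal{H}}_r(e)]}{\mathbb{E}[X_r(e)]} \ge \bigl(1 - C_d n^{\mu-1}\bigr)^{r} \ge e^{-2C_d\,rn^{\mu-1}} \ge e^{-2C_d c} =: \lambda(d,c).
\]
In particular $\mathbb{E}[X^{\mathcal{H}}_r(e)] \to \infty$ via Lemma~\ref{bbmconditions}. The main subtlety here is that a naive union bound over the $r$ bases would incur a loss of order $r n^{\mu-1} (d+1)! = O(c)$, which is useless when $c$ is not small; the telescoping product above instead loses only the positive constant $e^{-O(c)}$, valid for every value of $c$.

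For the variance, I would expand $X_r(e) = \sum_T \mathbf{1}_T$ over embeddings $T$ of $H_r$ rooted at $e$ and use
\[
\Var(X_r(e)) = \sum_{T_1,T_2}\Bigl(p^{|E(T_1)\cup E(T_2)|} - p^{|E(T_1)|+|E(T_2)|}\Bigr),
\]
where each summand is non-negative because $p\le 1$. The identical expansion applied to $X^{\mathcal{H}}_r(e)$ sums the same formula only over $\mathcal{H}$-clean pairs, so it is a sub-sum and $\Var(X^{\mathcal{H}}_r(e)) \le \Var(X_r(e))$. Combining with the expectation bound and setting $K_{d,c} := \lambda(d,c)^{-2}$,
\[
\frac{\Var(X^{\mathcal{H}}_r(e))}{\mathbb{E}[X^{\mathcal{H}}_r(e)]^2} \le \frac{\Var(X_r(e))}{\lambda^{2}\mathbb{E}[X_r(e)]^2} < K_{d,c}\left(\frac{p^{\eta(d)}n}{4v(H_r)^2}-1\right)^{-1},
\]
which tends to $0$ by Lemma~\ref{bbmconditions}.
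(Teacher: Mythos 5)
Your proposal is correct and follows essentially the same route as the paper: both bound $|T^{\mathcal{H}}|/|T|$ from below by a product over the $r$ copies (you use ordered $(d+1)$-tuples where the paper counts unordered sets and multiplies by $(d+1)!$, but this is cosmetic), apply $1-x\ge e^{-2x}$ with conditions (e) and (f) to land on a constant $e^{-O_d(c)}$, and observe that $\Var(X^{\mathcal{H}}_r(e))\le\Var(X_r(e))$ since the variance is a sum of non-negative terms and the clean embeddings form a subset, then set $K_{d,c}=\lambda(d,c)^{-2}$.
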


	\begin{proof}
 
		Define \(T=\left\{E\left(K\right):K\text{ is an embedding of }H_r\text{ in } K_n \text{ rooted at }e\text{ with vertices in }X\right\}\), and \(T^{\mathcal{H}}\subset T\) to be \(T^{\mathcal{H}}=\left\{E\left(K\right):K\text{ is a $\mathcal{H}$-clean embedding of }H_r\text{ rooted at }e\text{ with vertices in }X\right\}\). Now we can write
		\(X_r(e)=\sum_{S\in T}\mathbf{1}\left(S\subset E\left(G\right)\right)\), and likewise \(X^{\mathcal{H}}_r(e)=\sum_{S\in T^{\mathcal{H}}}\mathbf{1}\left(S\subset E\left(G\right)\right)\), from which we can derive
		\[\begin{split}\Var\left(X_r(e)\right)&{}=\sum_{S_1,S_2\in T}\Bigl(\mathbb{P}\bigl(S_1\cup S_2\subset E\left(G\bigr)\right)-\mathbb{P}\bigl(S_1\subset E\left(G\right)\bigr)\mathbb{P}\bigl(S_2\subset E\left(G\right)\bigr)\Bigr)\\&{}=\sum_{S_1,S_2\in T}\left(p^{\left|S_1\cup S_2\right|}-p^{2e\left(H_r\right)}\right)\\&{}\geq\sum_{S_1,S_2\in T^{\mathcal{H}}}\left(p^{\left|S_1\cup S_2\right|}-p^{2e\left(H_r\right)}\right)=\Var\left(X^{\mathcal{H}}_r(e)\right).\end{split}\]
		
		But now \(\mathbb{E}\left(X_r(e)\right)=\left|T\right|p^{e\left(H_r\right)}\), and likewise \(\mathbb{E}\left(X^{\mathcal{H}}_r(e)\right)=\left|T^{\mathcal{H}}\right|p^{e\left(H_r\right)}\). We choose an element of \(T\) by choosing in turn \(V(H^{(r)})\setminus e_r ,V(H^{(r-1)})\setminus e_{r-1},\dots,V(H^{(2)})\setminus e_2, V(H^{(1)})\setminus e_1\). Note that these are the bases of the \(H^{(i)}\), and that \(e_1=e\) is already fixed. Thus

        \[\left|T\right|=\left(\prod_{i=0}^{r-1}{{n-i(d+1)}\choose{d+1}} \right)\left((d+1)!\right)^r .\]

  By a similar counting, making sure that the base of $H^{(i)}$ is never an element of $E(\mathcal{H})$, we have
  
        \[\left|T^{\mathcal{H}}\right| \geq \left(\prod_{i=0}^{r-1} \left({{n-i(d+1)}\choose{d+1}}-|E(\mathcal{H})|\right) \right)\left((d+1)!\right)^r .\]

        Therefore, using $|E(\mathcal{H})| \leq n^{d+\mu}$, the inequality ${{N}\choose{k}} \geq (\frac{N}{k})^k$, and condition (e), we have 
        \begin{align*}
            |T^{\mathcal{H}}| &\geq |T| \prod_{i=0}^{r-1} \left(1-\frac{n^{d+\mu}}{(\frac{n/2}{d+1})^{d+1}} \right) \\
            &= |T| \left(1-\frac{(2(d+1))^{d+1}}{n^{1-\mu}} \right)^r.
        \end{align*}
		
		Note that for small enough \(x>0\), we have \((1-x)\geq e^{-2x}\). Thus for large \(n\), condition (f) gives
		
		\[\frac{\mathbb{E}\left(X^{\mathcal{H}}_r(e)\right)}{\mathbb{E}\left(X_r(e)\right)}=\frac{\left|T^{\mathcal{H}}\right|}{\left|T\right|}\geq\left(1-\frac{(2(d+1))^{d+1}}{n^{1-\mu}} \right)^r\geq e^{-2(2(d+1))^{d+1}rn^{\mu-1}}\geq e^{-2(2(d+1))^{d+1}c}=L_{d,c}^{-1/2},\]

where $L_{d,c}=e^{4(2(d+1))^{d+1}c}$ is a constant.
		Hence \(\mathbb{E}\left(X^{\mathcal{H}}_r(e)\right)\to\infty\), and
		
		\[\frac{\Var\left(X^{\mathcal{H}}_r(e)\right)}{\mathbb{E}\left(X^{\mathcal{H}}_r(e)\right)^2}\leq L_{d,c} \frac{\Var\left(X_r(e)\right)}{\mathbb{E}\left(X_r(e)\right)^2}<L_{d,c} \left(\frac{p^{\eta(d)}n}{4v(H_r)^2}-1\right)^{-1}\to0.\]
	\end{proof}
 
We now proceed to the proof of \Cref{thm:PollutedBootstrapPercolation}.
 \begin{proof}[Proof of \Cref{thm:PollutedBootstrapPercolation}.]
    Let $G'=\langle G \rangle^{\mathcal{H}}_{K_{d+3}}$. We may assume that $\delta^2 (1-2\delta)^{-2} < (2d)^{-1}$.
    We let $p \gg p_{*}(n,d)$, $r=\omega= \left\lceil \frac{\log n}{\log \log n} \right\rceil$ and apply \Cref{lem:PollutedBootstrapPercolation}. The authors of \cite{balogh_graph_2012} have already shown that conditions (a) to (d) are satisfied, and it is clear that the additional conditions (e) and (f) are also satisfied. It follows by Chebyshev's inequality that 
    $$\mathbb{P}\left( e \notin G' \right)=\mathbb{P}\left(X^{\mathcal{H}}_r(e)=0\right)\leq\frac{\Var\left(X^{\mathcal{H}}_r(e)\right)}{\mathbb{E}\left(X^{\mathcal{H}}_r(e)\right)^2}<L_{d,c}\left(\frac{p^{\eta(d)}n}{4v(H_r)^2}-1\right)^{-1}\to0.$$ 

    Hence, by Markov's inequality, $G'$ has at least $n^2/2-\delta^2 n^2/2$ edges {\whp} We now show that {\whp},  $G'$ contains a clique on $(1-\delta)n$ vertices by combining a similar claim and its proof from \cite{balogh_graph_2012} with the classical supersaturation result in \Cref{thm:CliqueDensity}.

    Let \(D=\left\{x\in [n]:d_{G'}(x)>(1-\delta)n\right\}\). Then
    \begin{align*}
        n^2-\delta^2 n^2 &\leq 2|E(G')| \\
        &= \sum_{v \in D} d_{G'}(v) + \sum_{v \in D^c} d_{G'}(v) \\
        &\leq |D|n+(n-|D|)(1-\delta)n \\
        &= |D|\delta n + (1-\delta)n^2.
    \end{align*}
    Rearranging, this gives $|D| \geq (1-\delta)n$.
    
    We now claim that $D$ is a clique for sufficiently large $n$. Suppose for contradiction that there exist \(x,y\in D\) such that there is no edge between $x$ and $y$ in $G'$. Let \(n_{*}=\left|N_{G'}(x)\cap N_{G'}(y)\right| \geq (1-2\delta)n\), and let $G^{*}$ be the induced subgraph of $G'$ on vertex set $N_{G'}(x)\cap N_{G'}(y)$. For sufficiently large $n$ we have 
    \begin{align*}
        |E(G^{*})| &\geq \binom{n_*}{2}-\delta^2 n^2/2 \\
        &\geq \binom{n_*}{2} - \delta^2 (1-2\delta)^{-2} n_{*}^2/2 \\
        &\geq \left (\frac{1}{2}-\delta^2(1-2\delta)^{-2} \right) n_*^2.
    \end{align*}
    Let $\gamma=1/2- \delta^2 (1-2\delta)^{-2}$. By \Cref{thm:CliqueDensity}, the graph $G^{*}$ contains \(\Omega_{\gamma}(n^{d+1})\) many \((d+1)\)-cliques. As $|E(\mathcal{H})| \leq n^{d+\mu}$, there exists a $(d+1)$-clique in $N_{G'}(x)\cap N_{G'}(y) \cap E(\mathcal{H})^c$. This is a contradiction. Hence, $D$ is a clique, which finishes the proof.
\end{proof} 

 \section{Proof of \Cref{thm:main}}
 \label{sec:ProofMainResult}
 We are now equipped with \Cref{thm:PollutedBootstrapPercolation}. 
 We aim to use it with the hyperedges corresponding to $d$-dependent tuples of size $d+1$, utilizing \Cref{obs:Percolation}. 
 We show in the following geometric lemma that either the number of $d$-dependent tuples of size $d+1$ is small enough to apply \Cref{thm:PollutedBootstrapPercolation}, or $\points$ has a large intersection with a subspace of lower dimension. 
 This dichotomy will enable us to then deduce \Cref{cor:Dichotomy2}, which roughly states that there always is a large subset of $\points$ lying in some (possibly non-proper) subspace of $\mathbb{R}^d$ for which we can apply \Cref{thm:PollutedBootstrapPercolation}.

 \begin{lemma}
 \label{lem:DichotomyHyperplaneDependent}
     Let $0 <\mu < 1$ and $\points$ be a multiset of $n$ points in $\mathbb{R}^d$. Then either
     \begin{itemize}
     \item there exists a subspace $\Pi_{d'}$ of dimension $0 \leq d' \leq d-1$ such that $|V \cap \Pi_{d'}| \geq n^{\mu}$, or
     \item the number of $d$-dependent tuples of $\points$ of size $d+1$ is at most $d n^{d+\mu}$.
     \end{itemize}
 \end{lemma}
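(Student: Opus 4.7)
I would prove the contrapositive: assume that no subspace $\Pi_{d'}$ of dimension $d'$ with $0 \leq d' \leq d-1$ satisfies $|V\cap\Pi_{d'}|\geq n^{\mu}$, and show that under this hypothesis the number of $d$-dependent $(d+1)$-families is at most $d n^{d+\mu}$. Note that a $0$-dimensional subspace is a single point, so the hypothesis in particular says that no point is repeated at least $n^{\mu}$ times in the multiset $V$.

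The key idea is to order each $d$-dependent family and identify a spanning prefix. Given a $d$-dependent family, pick any ordering $(p_1,\dots,p_{d+1})$ and let $k$ be the largest index such that $p_k \notin \aff(p_1,\dots,p_{k-1})$. Since the family is $d$-dependent, $\aff(p_1,\dots,p_{d+1})$ has dimension at most $d-1$, forcing $k\leq d$; and by maximality of $k$ the subspace $\Pi:=\aff(p_1,\dots,p_k)$ has dimension exactly $k-1\leq d-1$ and contains every remaining $p_{k+1},\dots,p_{d+1}$. The standing hypothesis then gives $|V\cap\Pi| < n^{\mu}$.

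I would then count ordered $d$-dependent tuples by summing over $k\in\{1,\dots,d\}$: there are at most $n^{k-1}\cdot n$ choices for $(p_1,\dots,p_k)$ (picking freely from $V$), and at most $(n^{\mu})^{d+1-k}$ choices for the suffix $(p_{k+1},\dots,p_{d+1})$, each of whose entries must lie in the $(k-1)$-dimensional subspace $\Pi$ already fixed. Summing gives
\[
\sum_{k=1}^{d} n^{k}\cdot n^{\mu(d+1-k)} \;=\; \sum_{k=1}^{d} n^{\mu(d+1)+k(1-\mu)} \;\leq\; d\,n^{d+\mu},
\]
since $\mu<1$ makes the exponent largest at $k=d$, where its value is $d+\mu$. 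This bounds the number of ordered tuples, and hence also the number of unordered families.

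There is no real obstacle: the argument is essentially a clean double count once one orders each tuple so that the first index at which the affine span stops growing is visible. The only mild subtlety is ensuring that the $d'=0$ case of the hypothesis is what rules out a collection of $n^{\mu}$ coincident points in the multiset $V$, which is built into the formulation of the lemma.
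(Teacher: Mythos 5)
Your approach is essentially the paper's (encode each $d$-dependent family by a spanning part plus the remaining points, and count), and your final computation is correct, but there is a genuine gap in the justification. You assert that for an \emph{arbitrary} ordering $(p_1,\dots,p_{d+1})$, the largest index $k$ with $p_k\notin\aff(p_1,\dots,p_{k-1})$ satisfies $k\leq d$, and that $\Pi=\aff(p_1,\dots,p_k)$ has dimension exactly $k-1$. Both claims are false. For a counterexample take $d=3$ with $p_1,p_2,p_3$ distinct and collinear and $p_4$ off that line: the affine span has dimension $2=d-1$, so the family is $d$-dependent, but the last ``jump'' occurs at $k=4=d+1$, and $\aff(p_1,\dots,p_4)$ has dimension $2\neq k-1=3$. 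The problem is that an arbitrary ordering can place non-jump indices before the last jump, so the prefix up to $k$ need not be affinely independent, and the number of jumps (which is what is bounded by $\dim\aff(S)+1\leq d$) is not the same as the position of the last jump.

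The fix is small and is what the paper does: do not take an arbitrary ordering, but rather, for each $d$-dependent family $S$ with $i:=\dim\aff(S)\leq d-1$, pick $i+1$ points of $S$ that affinely span $\aff(S)$. There are at most $n^{i+1}$ ordered choices for these spanning points, and the remaining $d-i$ points of $S$ lie in the $i$-dimensional subspace they span, which by hypothesis contains fewer than $n^{\mu}$ points of $V$. Summing $n^{i+1}(n^{\mu})^{d-i}$ over $i=0,\dots,d-1$ gives $dn^{d+\mu}$, which is exactly your computation with $k=i+1$. Alternatively, you could salvage the ordering-based phrasing by noting only that every $d$-dependent family admits \emph{some} ordering with $k\leq d$ (one beginning with an affinely independent spanning subset); that is already enough to bound the number of unordered families by the number of ordered tuples with $k\leq d$, even though not every ordering has $k\leq d$.
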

 
\begin{proof}
Suppose that there is no subspace $\Pi_{d'}$ of dimension $0 \leq d' \leq d-1$ such that $|V \cap \Pi_{d'}| \geq n^{\mu}$. Let $I$ be the set of $d$-dependent tuples of $V$ of size $d+1$, and for each $0 \leq i \leq d-1$, let $I_i$ be the set of families of $V$ of size $d+1$ whose affine span has dimension exactly $i$. We have $I= \bigcup_{i=0}^{d-1} I_i$ and therefore $|I| = \sum_{i=0}^{d-1} |I_i|$. By definition, for each element $S$ of $I_i$, there exist $x_1, \dots, x_i, x_{i+1} \in S$ such that the subspace $\Pi_S$ spanned by $x_1, \dots, x_i, x_{i+1}$ has dimension exactly $i$, and $S\setminus \{ x_1, \dots, x_{i+1} \}$ lies in $\Pi_S$. As we assumed that no subspace $\Pi_S$ of dimension $0 \leq d' \leq d-1$ satisfies $|V \cap \Pi_S| \geq n^{\mu}$, it follows that once $x_1, \dots, x_{i+1}$ are fixed, there are at most $n^{(d-i)\mu}$ possibilities for $S\setminus \{ x_1, \dots, x_{i+1} \}$. Therefore, $|I_i| \leq n^{i+1+(d-i)\mu}$, and consequently $|I| \leq  \sum_{i=0}^{d-1} n^{i+1+(d-i)\mu} \leq d n^{d+\mu}$.
\end{proof}

From \Cref{lem:DichotomyHyperplaneDependent} we deduce the following corollary.

\begin{corollary}
\label{cor:Dichotomy2}
    Let $0 <\mu < 1$ and $\points$ be a multiset of $n$ points in $\mathbb{R}^d$. Then there exists a subspace $\Pi_{d'}$ of dimension $0 \leq d' \leq d$ such that $|\points \cap \Pi_{d'}| \geq n^{\mu^{d-d'}}$ and the number of $d'$-dependent families of size $d'+1$ in $\points \cap \Pi_{d'}$ is at most $d|\points \cap \Pi_{d'}|^{d'+\mu}$.
\end{corollary}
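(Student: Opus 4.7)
The plan is to prove \Cref{cor:Dichotomy2} by iteratively applying \Cref{lem:DichotomyHyperplaneDependent}, descending one dimension at a time whenever the first (rather than the second) alternative of that lemma holds.

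Set $V_0 = \points$ and view $V_0$ as lying in $\mathbb{R}^{d_0}$ with $d_0 = d$, noting $|V_0| = n$. Apply \Cref{lem:DichotomyHyperplaneDependent} to $V_0$ in $\mathbb{R}^{d_0}$. If the second alternative holds, take $d' = d$ and $\Pi_{d'} = \mathbb{R}^d$: then $|\points \cap \Pi_{d'}| = n = n^{\mu^{0}} = n^{\mu^{d-d'}}$, and the bound $dn^{d+\mu}$ on $d$-dependent $(d+1)$-families is exactly $d|\points \cap \Pi_{d'}|^{d'+\mu}$, so we are done. Otherwise, the first alternative produces a subspace $\Pi^{(1)}$ of dimension $d_1 \leq d_0 - 1$ with $|V_0 \cap \Pi^{(1)}| \geq n^\mu$; set $V_1 = V_0 \cap \Pi^{(1)}$, identify $\Pi^{(1)}$ with $\mathbb{R}^{d_1}$, and repeat.

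Iterating, at step $k$ we obtain $V_k \subseteq \points$ lying in a subspace $\Pi^{(k)}$ of dimension $d_k$ with $d_k \leq d - k$ and $|V_k| \geq n^{\mu^k}$ (each descent raises the exponent of $\mu$ by one, since $|V_{k+1}| \geq |V_k|^\mu$). The process must terminate within at most $d+1$ steps, because the dimensions $d_0 > d_1 > \cdots$ strictly decrease and are non-negative; at the base case $d_k = 0$, the first alternative would demand a subspace of dimension $-1$ and is therefore vacuous, while the second alternative is automatic because every multiset of size $1$ is $0$-independent, so the number of $0$-dependent $1$-families is $0 \leq d|V_k|^{0+\mu}$.

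Suppose the iteration terminates at step $k$ via the second alternative, yielding $d' := d_k$ and $\Pi_{d'} := \Pi^{(k)}$. Because $0 < \mu < 1$ and $d - d' \geq k$, we have $n^{\mu^{d-d'}} \leq n^{\mu^{k}} \leq |V_k| = |\points \cap \Pi_{d'}|$, giving the required lower bound on the number of points in $\Pi_{d'}$. The bound supplied by the lemma on $d'$-dependent $(d'+1)$-families inside $V_k$ is $d' |V_k|^{d'+\mu} \leq d\,|\points \cap \Pi_{d'}|^{d'+\mu}$, which is exactly the desired inequality. No single step is technically difficult; the only things to watch are the edge case $d' = 0$ (handled by the convention that size-$1$ multisets are $0$-independent) and correctly bookkeeping that the exponent of $\mu$ grows by at least as much as the drop in dimension at each descent.
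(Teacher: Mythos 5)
Your proposal is correct and follows essentially the same route as the paper, which proves the corollary by induction on \(d\), applying \Cref{lem:DichotomyHyperplaneDependent} and recursing into the subspace whenever the first alternative holds; your iterative unrolling of that recursion, together with the bookkeeping that the exponent of \(\mu\) grows by at least one per descent while the dimension drops by at least one, is the same argument in a different presentation.
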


\begin{proof}
We prove this corollary by induction on $d$. For $d=0$, the corollary is trivially true. Suppose now $d \geq 1$. We apply \Cref{lem:DichotomyHyperplaneDependent} and distinguish whether the first or second outcome of it is realised.

In the first case, there exists a subspace $\Pi_{d'}$ of dimension $0 \leq d' \leq d-1$ such that $|V \cap \Pi_{d'}| \geq n^{\mu}$. We now apply the induction hypothesis on $V'=V \cap \Pi_{d'}$, giving a subspace $\Pi_{d''}$ of dimension $d''$ such that $0 \leq d'' \leq d' \leq d-1$ and $|V \cap \Pi_{d''}| = |V' \cap \Pi_{d''}| \geq |V'|^{\mu^{d'-d''}} \geq n^{\mu^{d-d''}}$ and the number of $d''$-dependent families of $V \cap \Pi_{d''}=V' \cap \Pi_{d''}$ is at most $d|V \cap \Pi_{d''}|^{d''+\mu}$, as wanted.

In the second case, we immediately get the desired outcome for $d'=d$ and $\Pi_d=\mathbb{R}^d$.

Therefore, we are finished by induction.
\end{proof}

By the corollary above, for any set of $n$ points $\points$ in $\mathbb{R}^d$, there is a subspace $\Pi_{d'}$ that contains many of these points and such that there are not many $d'$-dependent families in $\points \cap \Pi_{d'}$.
The aim of the next lemma is to give sufficient conditions to reconstruct $\dist{v,\Pi_{d'}}$ and the projections of $v$ onto $\Pi_{d'}$ for almost all of $v \in V$ \whp, when the pairwise distances are revealed as $G(n,p)$ for sufficiently large $p$.

 \begin{lemma}
 \label{lem:RecoveringDistanceToHyperplane}
 Let $d \geq 1$ be an integer and $\newr$, $\delta$, $\lambda>0$ be such that $\newr-(d+1)\delta \geq \lambda$. Let $\points$ be a multiset of \(n\) points in \(\mathbb{R}^d\). Let $A \subset \points$ and $p \gg p_*(n,d)$. Suppose that 
     \begin{enumerate}
        \item there exists a $d'$-dimensional subspace $\Pi_{d'}$ in $\mathbb{R}^d$ such that $A \subset \Pi_{d'}$ and at least $\newr\binom{|A|}{d'+1}$ of sets of \(d'+1\) points of $A$ are $d'$-independent,
        \item $p|A| \gg 1$, and
        \item there exists a set $A' \subseteq A$ such that $|A'|\geq (1-\delta/2)|A|$ and for every $u,v \in A'$, we know the distance $\dist{u,v}$.
     \end{enumerate}
     If the distances are revealed according to a random graph $G(n,p)$, then {\whp}\ there exists a subset $V' \subseteq \points$ of size $|V'| \geq (1-\delta)|\points|$ such that for every $v' \in V'$, we can reconstruct $\dist{v', \Pi_{d'}}$ and the projection of \(v'\) onto \(\Pi_{d'}\) relative to the embedding of \(A'\).
 \end{lemma}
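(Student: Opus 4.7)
The plan is to call a vertex $v\in\points$ \emph{good} if there exist points $x_1,\ldots,x_{d'+1}\in A'$ that are $d'$-independent and such that every distance $\dist{v,x_i}$ is revealed in $G$. Once $v$ is good, we know the positions of $x_1,\ldots,x_{d'+1}$ inside $\Pi_{d'}$ (from the embedding of $A'$) together with the $d'+1$ distances $\dist{v,x_i}$; writing $v=v_{\parallel}+v_{\perp}$ with $v_{\parallel}\in\Pi_{d'}$, the differences $\dist{v,x_i}^2-\dist{v,x_j}^2=\dist{v_{\parallel},x_i}^2-\dist{v_{\parallel},x_j}^2$ are linear in $v_{\parallel}$ and pin it down uniquely because the $x_i$ are $d'$-independent, after which $|v_{\perp}|^2=\dist{v,x_1}^2-\dist{v_{\parallel},x_1}^2$ is recovered. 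Hence both the orthogonal projection of $v$ onto $\Pi_{d'}$ and $\dist{v,\Pi_{d'}}$ are reconstructible. Every $v\in A'$ is vacuously good (its projection is itself and its distance to $\Pi_{d'}$ is $0$), so it suffices to show that at most $\delta|\points|$ vertices of $\points\setminus A'$ fail to be good {\whp}

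First I would count the $d'$-independent $(d'+1)$-tuples lying entirely inside $A'$. Since $|A\setminus A'|\le(\delta/2)|A|$, the number of $(d'+1)$-tuples of $A$ meeting $A\setminus A'$ is at most $(\delta/2)|A|\binom{|A|-1}{d'}=(d'+1)(\delta/2)\binom{|A|}{d'+1}$, so assumption~(1) together with the hypothesis $\newr-(d+1)\delta\ge\lambda$ (and $d'\le d$) forces at least $\lambda\binom{|A|}{d'+1}$ such tuples to lie inside $A'$. Now fix $v\in\points\setminus A'$ and let $Y_v$ be the number of these $d'$-independent $(d'+1)$-tuples in $A'$ every edge from which to $v$ is present in $G$. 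The count above gives $\mathbb{E}[Y_v]\ge\lambda\binom{|A|}{d'+1}p^{d'+1}=\Theta_{d,\lambda}\!\left((|A|p)^{d'+1}\right)\to\infty$ by assumption~(2). A routine second-moment computation then shows that the $k=0$ contribution to $\mathbb{E}[Y_v^2]$ is bounded by $\mathbb{E}[Y_v]^2$ and cancels, while pairs of tuples sharing exactly $k\ge 1$ points contribute $O_d(|A|^{2(d'+1)-k}p^{2(d'+1)-k})$ to $\Var(Y_v)$; the $k=1$ term dominates and yields
\[\frac{\Var(Y_v)}{\mathbb{E}[Y_v]^2}=O_{d,\lambda}\!\left(\frac{1}{|A|p}\right)\longrightarrow 0,\]
so by Chebyshev's inequality $\Pr[Y_v=0]=o(1)$ uniformly in $v$.

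Finally, letting $B$ count the bad vertices in $\points\setminus A'$, we get $\mathbb{E}[B]\le|\points|\cdot o(1)=o(|\points|)$, so Markov's inequality gives $B<\delta|\points|$ {\whp}, and setting $V'=\points\setminus\{\text{bad vertices}\}$ yields $|V'|\ge(1-\delta)|\points|$ together with the promised reconstructions. The main obstacle I expect is keeping the second-moment bookkeeping clean: one has to verify that the $k=1$ term really does dominate $\Var(Y_v)/\mathbb{E}[Y_v]^2$ and gives the clean rate $(|A|p)^{-1}$, which is exactly where assumption~(2) enters, and this succeeds only because the lower bound from the counting step is of the maximal order $\Theta(|A|^{d'+1})$---which is precisely what the inequality $\newr-(d+1)\delta\ge\lambda$ ensures.
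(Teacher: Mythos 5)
Your proposal is correct and follows essentially the same route as the paper: lower-bound the number of $d'$-independent $(d'{+}1)$-tuples inside $A'$ using $\alpha-(d{+}1)\delta\ge\lambda$, apply a second-moment argument per vertex to show that {\whp}\ each point sees some such tuple across revealed edges (with $\Var/\mathbb{E}^2=O((|A|p)^{-1})\to0$ under assumption~(2)), and then conclude via Markov. The only cosmetic differences are that the paper invokes \Cref{lem:GeometryFact} (Gram matrices) to extend the embedding of a reconstructed $\{v\}\cup I$ to all of $\{v\}\cup A'$, whereas you carry out the equivalent reconstruction directly by solving the linear system given by the squared-distance differences, and the paper takes its complement relative to $A$ rather than $A'$ when applying Markov; both choices give the same conclusion.
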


 \begin{proof}
 Let $\mathcal{I}$ be the set of $d'$-independent sets in $A'$. Note that $|\mathcal{I}|\geq \newr\binom{|A|}{d'+1}-\delta/2|A|\binom{|A|}{d'}$, and that $\binom{|A|}{d'+1} = \binom{|A|}{d'}\frac{|A|-d'-1}{d'+1} \geq \binom{|A|}{d'}\frac{|A|}{2(d+1)}$ when $n$ is large enough. Therefore, we have $|\mathcal{I}| \geq \lambda \binom{|A|}{d'+1}$ for $n$ large enough.
 
 Let $B=\points \setminus A$. Let $u \in B$, and for each $I \in \mathcal{I}$ we define the random variable $I_u$ to be the indicator of $u$ having an edge to each element of $I$. We also define the random variable $V_u=\sum_{I \in \mathcal{I}} I_u$. Since $|A|p \to\infty$, we have

 \[\mathbb{E}V_u\geq p^{d'+1}{|A|\choose{d'+1}}\lambda =  \Omega_{d, \lambda}\left(\left(|A|p\right)^{d'+1}\right), \mbox{ and }\]

 \[\begin{split}\Var V_u &{}=\sum_{I,I' \in \mathcal{I}}\left(p^{\left|I\cup I'\right|}-p^{2\left(d'+1\right)}\right)\\&{}\leq\sum_{I,I'\in A^{(d'+1)}}\left(p^{\left|I\cup I' \right|}-p^{2\left(d'+1\right)}\right)\\& =\sum_{i=0}^{d'+1}{|A|\choose{d'+1+i}}{{d'+1+i}\choose{i}}{{d'+1}\choose{i}}(p^{d'+1+i}-p^{2(d'+1)})\\&
 <\sum_{i=0}^{d'}{|A|\choose{d'+1+i}}{{d'+1+i}\choose{i}}{{d'+1}\choose{i}}p^{d'+1+i}\\&{} {}=O_{d}\left(\left(|A|p\right)^{2d'+1}\right).\end{split}\]

Let $B'=\{ u \in B: V_u > 1 \}$. By Chebyshev's inequality, we have \(\mathbb{P}\left(V_u=0\right)=O_{d,\lambda}\left(\left(|A|p\right)^{-1}\right) \to 0\), and therefore $\mathbb{E}|B'|=(1-o(1))|B|$. Hence, {\whp}\ $|B'| \geq (1-\delta/2)|B|$. Now by \Cref{lem:GeometryFact}, for every $b' \in B'$, there is an \(I\in\mathcal{I}\) such that we can reconstruct \(\{b'\}\cup I\). Since \(I\) is \(d'\)-independent, we can extend this embedding uniquely to reconstruct \(\{b'\}\cup A'\). Thus we can reconstruct $\dist{b', \Pi_{d'}}$ and the projection of \(b'\) onto \(\Pi_{d'}\) relative to the embedding of \(A'\). Letting $V'=B' \cup A'$, we reach the desired conclusion.
 \end{proof}

 We now present the following technical lemma, which allows us to reduce the problem to a lower dimension.

 \begin{lemma}
 \label{lem:InductionTechnicalLemma}
 Let $d \geq 1$, $0<\newdelta<\frac{1}{2(d+1)}$ be fixed and $V=\{v_1, \dots, v_n\}$ be a multiset of \(n\) points in \(\mathbb{R}^d\). Suppose the graph \(G=(V,\mathcal{P})\) of revealed pairwise distances is distributed as \(G(n,p)\) and that \(p\gg p_*(n,d) \). Then, {\whp}, we can explicitly construct some constants $b_{i,j}$ such that there exists a set $\newI \subseteq [n]$ such that $|\newI| \geq (1-\newdelta)n$ and $\tilde{V}=\{\tilde{v}_1, \dots, \tilde{v}_n \}$ a multiset of \(\mathbb{R}^{d'}\) for some $d' < d$ such that for every $i,j \in \newI$, we have $\dist{v_i,v_j}^2=b_{i,j}+\dist{\tilde{v}_i,\tilde{v}_j}^2$.
 \end{lemma}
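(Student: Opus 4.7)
The plan is to combine \Cref{cor:Dichotomy2}, \Cref{thm:PollutedBootstrapPercolation}, and \Cref{lem:RecoveringDistanceToHyperplane}, and then to conclude by an orthogonal decomposition of squared distances against the subspace produced by the dichotomy. We would first apply \Cref{cor:Dichotomy2} to $V$ with a parameter $\mu\in(0,1)$ chosen close enough to $1$ (depending on $d$ and $\newdelta$) that $\mu^{d-d^*}>\eta(d^*)/\eta(d)$ for every $1\leq d^*\leq d$. This produces an affine subspace $\Pi$ of some dimension $d^*\in\{0,\dots,d\}$ with $m:=|V\cap\Pi|\geq n^{\mu^{d-d^*}}$ and at most $dm^{d^*+\mu}$ many $d^*$-dependent $(d^*+1)$-subsets of $V\cap\Pi$.

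In the main case $1\leq d^*\leq d$, set $A:=V\cap\Pi$ and view it as a multiset in $\Pi\cong\mathbb{R}^{d^*}$. The induced random graph $G[A]$ is $G(m,p)$, and the choice of $\mu$ guarantees $p\gg p_*(m,d^*)$. Applying \Cref{thm:PollutedBootstrapPercolation} with pollution $\mathcal{H}$ equal to the $d^*$-dependent $(d^*+1)$-subsets of $A$ produces \whp\ a clique $A'$ of size at least $(1-\delta_1)m$ in the polluted closure $\langle G[A]\rangle^{\mathcal{H}}_{K_{d^*+3}}$. By \Cref{cor:GeometryFact}, each edge added by the bootstrap corresponds to a genuinely reconstructible distance, so every pairwise distance within $A'$ is known, and \Cref{lem:GeometryFact} lets us reconstruct $A'$ in $\Pi$ up to isometry. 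Applying \Cref{lem:RecoveringDistanceToHyperplane} with this $A$, $A'$, and $\Pi$ (the edge-density hypothesis $p|A|\gg 1$ also follows from the choice of $\mu$, since $\eta(d^*)\geq 2$) then yields \whp\ a set $V^*\subseteq V$ with $|V^*|\geq(1-\newdelta)n$ such that the projection $\pi_\Pi(v_i)$ is reconstructible for every $i\in V^*$ relative to the embedding of $A'$.

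To close this case, write $\Pi=p_0+L$ with $L$ a linear subspace of dimension $d^*$, set $\newI:=V^*$, $d':=d-d^*\leq d-1$, $\tilde v_i:=\pi_{L^\perp}(v_i-p_0)\in L^\perp\cong\mathbb{R}^{d'}$ for $i\in V^*$ (and arbitrary otherwise), and explicitly define $b_{i,j}:=\|\pi_\Pi(v_i)-\pi_\Pi(v_j)\|^2$, which is computable from the reconstructed projections. The orthogonal identity $\|v_i-v_j\|^2=\|\pi_\Pi(v_i)-\pi_\Pi(v_j)\|^2+\|\pi_{L^\perp}(v_i-v_j)\|^2$ then gives the required decomposition $\|v_i-v_j\|^2=b_{i,j}+\|\tilde v_i-\tilde v_j\|^2$ for all $i,j\in \newI$.

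The edge case $d^*=0$ has to be treated separately, since then $\Pi=\{p_0\}$ and projection onto $\Pi^\perp$ does not reduce the dimension. The approach here is to exploit the anchor structure: \whp\ the set $B'\subseteq V\setminus A$ of vertices with at least one revealed edge to $A$ has size $\geq(1-\delta_2)(n-|A|)$ and has $\|v-p_0\|$ known for each $v\in B'$; one then recursively applies \Cref{cor:Dichotomy2} to $V\setminus A$ (a smaller instance in $\mathbb{R}^d$) and splices the anchors into $\newI$ by defining $b_{i,j}$ so that the anchor-to-anchor and anchor-to-$V\setminus A$ distances are absorbed. The hardest parts of the proof will be this $d^*=0$ case together with the simultaneous tuning of $\mu$ so that the bootstrap threshold $p\gg p_*(m,d^*)$, the edge-density condition $p|A|\gg 1$, and the pollution-size bound $|\mathcal{H}|\leq m^{d^*+\mu_0}$ with $\mu_0<1$ all hold for every possible value of $d^*\geq 1$.
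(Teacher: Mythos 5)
Your plan for the case $d^*\geq 1$ matches the paper's proof closely: apply \Cref{cor:Dichotomy2} to find a subspace $\Pi$ with many points and few degenerate families, run the polluted bootstrap percolation (\Cref{thm:PollutedBootstrapPercolation}) inside $\Pi$ to obtain a known-distance clique $A'$, use \Cref{lem:RecoveringDistanceToHyperplane} to recover projections onto $\Pi$, and then invoke the Pythagorean identity to split $\dist{v_i,v_j}^2$ into a known part $b_{i,j}$ and a residual $\dist{\tilde v_i,\tilde v_j}^2$ in $L^\perp\cong\mathbb{R}^{d-d^*}$. Your tuning of $\mu$ — requiring $\mu^{d-d^*}\geq\eta(d^*)/\eta(d)$ so that $p\gg p_*(m,d^*)$ — is in fact more careful than the paper's bare condition $\mu>(1/\eta(d))^{1/d}$, and the verification of $p|A|\gg 1$ via $\eta(d^*)\geq 2$ is correct. (Note the stated inequality should be non-strict, since at $d^*=d$ it reads $1>1$; this is a typo-level issue.) You should also split $G$ into independent sprinkled rounds so that the bootstrap step and the projection-recovery step use fresh randomness; the paper uses three rounds for this.

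The genuine gap is in the case $d^*=0$. You correctly identify that a single anchor point $p_0$ does not reduce the dimension, but the remedy you describe — recursively applying \Cref{cor:Dichotomy2} to $V\setminus A$ and ``splicing the anchors into $J$ by defining $b_{i,j}$ so that the anchor-to-anchor and anchor-to-$V\setminus A$ distances are absorbed'' — is not an argument. Knowing $\|v-p_0\|$ places $v$ on a sphere about $p_0$, not in an affine subspace, so these distances cannot be absorbed into a decomposition $\dist{v_i,v_j}^2=b_{i,j}+\dist{\tilde v_i,\tilde v_j}^2$: the cross term depends on the angle $\angle(v_i-p_0,v_j-p_0)$, which is not determined by the two radii. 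The paper's resolution is different and is the crux of this case: if $|M|<(1-\varepsilon)n$, apply \Cref{cor:Dichotomy2} again to $Y=V\setminus M$; if this produces a subspace of dimension $\geq 1$ one proceeds as in the main case, and if it again produces a point of high multiplicity $M'$ (necessarily distinct from $M$), one takes the \emph{line} $\Pi_1$ through $M$ and $M'$, applies \Cref{lem:RecoveringDistanceToHyperplane} twice with $d'=0$ (once with $A=M$, once with $A=M'$) to learn $\dist{z,M}$ and $\dist{z,M'}$ for almost all $z$, and then triangulates these two radii against the baseline $\dist{M,M'}$ to recover the projection of $z$ onto $\Pi_1$. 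This two-anchor triangulation, which manufactures a one-dimensional subspace out of two zero-dimensional ones, is the missing ingredient in your sketch; without it the $d^*=0$ case does not close.
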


Note that while we obtain the values of the \(b_{i,j}\), we do not claim to know the configuration of $\tilde{V}$. However, given \(b_{i,j}\), we can compute \(\dist{\tilde{v}_i,\tilde{v}_j}\) from \(\dist{v_i,v_j}\) and vice versa. Before proving \Cref{lem:InductionTechnicalLemma}, we show that a simple iteration of \Cref{lem:InductionTechnicalLemma} enables us to deduce \Cref{thm:main}.

 \begin{proof}[Proof of \Cref{thm:main} assuming \Cref{lem:InductionTechnicalLemma}]
 We let $q(n,d)=p_{*}(n,d)$ and $p \gg p_{*}(n,d)$. Note that $G$ sampled according to $G(n,p)$ can also be sampled as $\cup_{i=0}^{d-1} G_i$ where the $G_i$ are independent from each other and each $G_i$ is sampled according to $G(n,p')$ where $1-p=(1-p')^d$. By Bernoulli's inequality, $p' \geq p/d$, and therefore $p' \gg p_{*}(n,d)$.

     Let $\kappa>0$, and let $\newdelta=\min(\frac{1}{2(d+1)},\kappa/d)$. We start by applying \Cref{lem:InductionTechnicalLemma} to $V^{(0)}=V$ with the random graph $G_0$, and {\whp}\ get explicit constants $b^{(0)}_{i,j}$, a set $\newI^{(0)} \subseteq [n]$ such that $|\newI^{(0)}| \geq (1-\newdelta)n$, an integer $d^{(1)}<d$ and a set $V^{(1)} =\{v^{(1)}_1, \dots, v^{(1)}_n\}$ lying in $\mathbb{R}^{d^{(1)}}$ such that for every $i,j \in \newI^{(0)}$, we have $\dist{v^{(0)}_i,v^{(0)}_j}^2=b^{(0)}_{i,j}+\dist{v^{(1)}_i,v^{(1)}_j}^2$. If $d^{(1)} \neq 0$, we apply \Cref{lem:InductionTechnicalLemma} again to $V^{(1)}$ with the random graph $G_1$, and get explicit constants $b^{(1)}_{i,j}$, a set $\newI^{(1)} \subseteq [n]$ such that $|\newI^{(1)}| \geq (1-\newdelta)n$, an integer $d^{(2)}<d^{(1)}$ and a set $V^{(2)}=\{v^{(2)}_1, \dots, v^{(2)}_n\}$ lying in $\mathbb{R}^{d^{(2)}}$ such that for every $i,j \in \newI^{(2)}$, we have $\dist{v^{(1)}_i,v^{(1)}_j}^2=b^{(1)}_{i,j}+\dist{v^{(2)}_i,v^{(2)}_j}^2$.
     
     We continue in the same fashion until we have $d^{(k)}=0$ for some integer $k$. As $d^{(i+1)} \leq d^{(i)} -1$ for all $i \leq k-1$, we have $k \leq d$. Let $\newI=\newI^{(0)} \cap \dots \newI^{(k-1)}$. We have $|\newI|=|\newI^{(0)} \cap \dots \newI^{(k-1)}| \geq (1-d\newdelta)n \geq (1-\kappa)n$, and for every $i,j \in \newI$, it follows that $\dist{v_i,v_j}^2=\dist{v^{(0)}_i,v^{(0)}_j}^2=b^{(0)}_{i,j}+ \dist{v^{(1)}_i,v^{(1)}_j}^2= \dots =b^{(0)}_{i,j}+ \dots + b^{(k-1)}_{i,j}$. Therefore we can reconstruct all the pairwise distances in $\newI$ {\whp} This finishes the proof.
 \end{proof}

All there is left is to prove \Cref{lem:InductionTechnicalLemma}.

\begin{proof}[Proof of \Cref{lem:InductionTechnicalLemma}]
 Let $p \gg p_{*}(n,d)$. Note that $G$ sampled according to $G(n,p)$ can also be sampled as $G_1 \cup G_2 \cup G_3$ where $G_1$, $G_2$ and $G_3$ are independent from each other and $G_1$, $G_2$ and $G_3$ are each sampled according to $G(n,p')$ where $1-p=(1-p')^3$. By Bernoulli's inequality, $p' \geq p/3$, and therefore $p' \gg p_{*}(n,d)$.
By \Cref{cor:Dichotomy2} applied to $\points$ and $(1/\eta(d))^{1/d}<\mu<1$ there exists a subspace $\Pi_{d'}$ of dimension $0 \leq d' \leq d-1$ such that $|V \cap \Pi_{d'}| \geq n^{\mu^{d-d'}}$ and the number of $d'$-dependent families of $\points \cap \Pi_{d'}$ is at most $d|\points \cap \Pi_{d'}|^{d'+\mu}$. 

If $d' \geq 1$, let $A=V \cap \Pi_{d'}$ and let $\mu'$ be fixed such that $\mu < \mu' < 1$. The number of $d'$-dependent families of $A$ is at most $d|A|^{\mu+d'} \leq |A|^{\mu'+d'}$ for $n$ large enough. Therefore by \Cref{thm:PollutedBootstrapPercolation} with the pairwise distances revealed as $G_1$, {\whp}\ we can reconstruct the pairwise distances within a set $A' \subseteq A$ of size $|A'| \geq (1-\newdelta/2)|A|$. By \Cref{lem:RecoveringDistanceToHyperplane} with the pairwise distances revealed as $G_2$, since $p'|A| \geq p' n^{\mu^d} \gg 1$, {\whp}\ there is a set $W \subseteq \points$ such that $|W| \geq (1-\newdelta)|\points|$ and for every element of $W$, we can reconstruct the distance to $\Pi_{d'}$ and the projection onto \(\Pi_{d'}\) relative to the embedding of \(A'\).
For every $v_i \in W$, let $v^{\parallel}_i$ be the projection of $v_i$ on $\Pi_{d'}$ and $\tilde{v}_i=v_i-v^{\parallel}_i$. For every $v_i, v_j \in W$, let $b_{i,j}=\dist{v^{\parallel}_i, v^{\parallel}_j}^2$, and note that we have knowledge of the value of $b_{i,j}$. The $\tilde{v}_i$ lie inside of a space isomorphic to $\mathbb{R}^{d-d'}$, and for every $v_i, v_j \in W$, we have $\dist{v_i,v_j}^2=b_{i,j}+\dist{\tilde{v}_i,\tilde{v}_j}^2$. This finishes the first case.

If $d'=0$, we have a point of multiplicity $m \geq n^{\mu^{d}}$ in $\points$, and let $M$ be the multiset corresponding to it. If $m \geq (1-\newdelta)n$, then as $p' \geq p_{*}(n,d)/3 \geq 4\frac{\log n}{n} \geq 2\frac{\log m}{m}$ for $n$ large enough, by \Cref{thm:ConnectivityErdosRenyi} we have that {\whp}\ $G_1$ restricted to the set $M$ is connected. Therefore we can reconstruct the set $M$, as wanted. If $m \leq (1-\newdelta)n$, then $Y=\points \setminus M$ has size $|Y|\geq \newdelta n$. We apply \Cref{cor:Dichotomy2} again, but this time to $Y$. Therefore there exists a subspace $\Pi_{d''}$ of dimension $0 \leq d'' \leq d-1$ such that $|Y \cap \Pi_{d''}| \geq \newdelta^{\mu^{d-d''}}n^{\mu^{d-d''}}$ and the number of $d''$-dependent families of $Y \cap \Pi_{d''}$ is at most $d |Y \cap \Pi_{d''}|^{d''+\mu}$. If $d'' \geq 1$, then one concludes the same way as in the case $d' \geq 1$.

If $d''=0$, then there exists a point of multiplicity $m' \geq |Y|^{\mu^{d}} \geq \newdelta^{\mu^{d}}n^{\mu^{d}}$ in $Y$, and we let $M'$ be the multiset corresponding to it. Note that for $n$ large enough we have $p' \geq p_{*}(n,d)/3 \geq 2\frac{\log n}{\newdelta n} \geq 2\frac{\log m'}{m'}$, and therefore by \Cref{thm:ConnectivityErdosRenyi} with the pairwise distances revealed as $G_2$ {\whp}\ we can reconstruct the set $M'$. Let $\Pi_1$ be the line in $\mathbb{R}^d$ containing both $M$ and $M'$ and let $M''=M \cup M'$. Clearly, \whp\ the distance between $M$ and $M'$ is among the revealed distances.
Note that $p|M| \gg 1$ and $p|M'| \gg 1$, so we may apply \Cref{lem:RecoveringDistanceToHyperplane} twice, once for $A=M$, and once for $A=M'$, with $d'=0$, $\newr=1$, $\delta=\newdelta/2$, $\lambda=1/2$. We obtain sets $X, X' \subset \points$ such that $|X| \geq (1-\newdelta/2)n$ and $|X'|\geq (1-\newdelta/2)n$, and for each $x \in X$, we have reconstructed $\dist{x,M}$ and for each $x' \in X'$, we have reconstructed $\dist{x',M'}$. Letting $Z=X \cap X'$, we have $|Z| \geq (1-\newdelta)n$, and for every $z \in Z$, we have reconstructed $\dist{z,\Pi_1}$ and the projection $z'$ of $z$ onto $\Pi_1$ relative to the embedding of $M''$. From there, we can conclude the same way as in the first outcome above. This finishes the proof.
\end{proof}

\section{Concluding remarks}
\label{sec:Conclusion}

For any set of $n$ points $\points$ in $\mathbb{R}^d$, we have obtained in \Cref{thm:main} a non-trivial bound on $p$ under which, {\whp}, we can reconstruct the pairwise distances within a subset of $\points$ of size $n-o(n)$. However, we do not believe that our bound is tight. For instance, for $d=1$, much stronger results have been obtained in \Cref{thm:GIMPSLinear} by Gir\~ao, Illingworth, Michel, Powierski, and Scott, suggesting that the criticial probability could be $c_d/n$ for some constant $c_d$. Additionally, we have sought to reconstruct \(n-o(n)\) points, rather than \(\Omega(n)\) as posed by them in \Cref{qu:GIMPS}---we do not know whether the thresholds of these two problems are different.

We remark that Bartha and Kolesnik \cite{bartha_weakly_2023} recently improved upon Balogh, Bollob\'as, and Morris's result \Cref{thm:Balogh} and determined $p_c(n,K_{d+3})$ up to a multiplicative constant, i.e., \(p_c(n,K_{d+3}) = \Theta\left(n^{-1/\eta(d)}\right)\). It is possible that adapting their method to the reconstructibility problem could improve the result of \Cref{thm:main} up to a polylogarithmic factor, but such attempts were not made in this paper, since, as mentioned earlier, we do not believe that the range of $p$ in \Cref{thm:main} is close to optimal. We would also like to remark that, instead of \Cref{thm:PollutedBootstrapPercolation}, one could have applied a second moment method to find the number of embeddings of \(K_{d+3}-e\) rooted on each edge. This corresponds to taking \(r=1\) in this analysis, and would give a slightly larger exponent of \(n\) in \(q(n,d)=p_{*}(n,d)\).

\section{Acknowledgements}
The research was mostly conducted while the first, the fourth and the fifth authors were students taking part in the University of Cambridge Summer Research in Mathematics (SRIM) programme, and were supervised by Julian Sahasrabudhe. The authors would like to thank Julian Sahasrabudhe for the many hours of helpful discussions, valuable comments on the manuscript and for suggesting the problem.

The first and fourth authors were supported by the Trinity College Summer Studentship Scheme fund. The fifth author was supported by the Churchill College Summer Opportunities Bursary and the CMS Bursary. The second and third author are supported by EPSRC (Engineering and Physical Sciences Research Council): the second author's reference is EP/V52024X/1, the third author's reference is EP/T517847/1. The second author is also supported by the Department of Pure Mathematics and Mathematical Statistics of the University of Cambridge, and the third author is also supported by the Cambridge Commonwealth, European and International Trust.

\bibliographystyle{amsplain}  
\renewcommand{\bibname}{Bibliography}
\bibliography{brsbibtex}

\appendix
\section{Appendix}
\label{appendix}

We now prove \Cref{lem:GeometryFact} and its \Cref{cor:GeometryFact}, repeated here for ease of reading.
\renewcommand\thetheorem{2.2}

\begin{lemma}
        Given all pairwise distances within some \(S=\{v_1,\dots,v_r\}\subset\mathbb{R}^d\), we can reconstruct the configuration of \(S\) up to isometry.
	\end{lemma}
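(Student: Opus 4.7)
The plan is to reduce the reconstruction to factoring a Gram matrix. I would begin by translating so that $v_1 = 0$; this is harmless, since we aim to reconstruct $S$ only up to isometry. The polarization identity gives
\[\langle v_i, v_j\rangle = \tfrac{1}{2}\left(\|v_i\|^2 + \|v_j\|^2 - \|v_i-v_j\|^2\right) \qquad (i,j\ge 2),\]
and the right-hand side is a function of the known pairwise distances, using $\|v_i\| = \dist{v_1,v_i}$. Hence I can compute the entire Gram matrix $G \in \mathbb{R}^{(r-1)\times(r-1)}$ defined by $G_{ij} = \langle v_{i+1},v_{j+1}\rangle$, which is symmetric, positive semidefinite, and of rank at most $d$.

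Next I would explicitly construct a candidate configuration realizing $G$. By the spectral theorem write $G = U\Lambda U^T$ with $\Lambda$ diagonal and nonnegative, and set $X = \Lambda^{1/2} U^T$. Since $\mathrm{rank}(G)\le d$, at most $d$ rows of $X$ are nonzero, so its columns may be regarded as vectors $w_2,\dots,w_r \in \mathbb{R}^d$ (padding by zero coordinates if needed). By construction $\langle w_i,w_j\rangle = G_{ij} = \langle v_i,v_j\rangle$, and applying polarization in reverse gives $\|w_i - w_j\| = \|v_i - v_j\|$ and $\|w_i\| = \|v_i\|$, so the tuple $(0,w_2,\dots,w_r)$ realizes all the prescribed distances.

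The main step — and the real content of the lemma — is uniqueness up to isometry; this is what I expect to be the slightly delicate part of the argument. I would show that if $(w_2,\dots,w_r)$ is any tuple in $\mathbb{R}^d$ satisfying $\langle w_i,w_j\rangle = \langle v_i,v_j\rangle$ for all $i,j$, then the map $v_i\mapsto w_i$ extends to an element of $O(d)$. Indeed, any linear relation $\sum c_i v_i = 0$ forces
\[\Bigl\|\sum c_i w_i\Bigr\|^2 = \sum_{i,j} c_i c_j \langle w_i,w_j\rangle = \sum_{i,j} c_i c_j \langle v_i,v_j\rangle = 0,\]
so the assignment extends to a well-defined linear, inner-product-preserving map on $\mathrm{span}(v_2,\dots,v_r)$; extending by any orthogonal map on the orthogonal complement produces the desired element of $O(d)$. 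Composed with the initial translation, this supplies the isometry identifying the reconstructed configuration with $S$, completing the proof.
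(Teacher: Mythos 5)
Your proof is correct and follows the same Gram-matrix approach as the paper: both translate to a basepoint, compute the Gram matrix from pairwise distances via polarization, and conclude that the Gram matrix determines the configuration up to orthogonal transformation. The only difference is that you prove the standard fact that the Gram matrix determines the point set up to orthogonal transformation (via the spectral decomposition for existence and the linear-extension argument for uniqueness), whereas the paper simply cites it from Horn and Johnson.
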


 \begin{proof}
        Write \(u_i=v_i-v_r\) for \(1\leq i< r\). Recall that the Gram matrix of the vectors \(u_1,\dots,u_{r-1}\) is the matrix given by \(G_{ij}=\langle u_i,u_j\rangle\). For \(i=j\), this is \(|v_i-v_r|^2\). For \(i\neq j\), we have \(|v_i-v_j|^2=|u_i-u_j|^2=G_{ii}+G_{jj}-2G_{ij}\), so \(G_{ij}=\frac12\left(|v_i-v_r|^2+|v_j-v_r|^2-|v_i-v_j|^2\right)\). Thus we can compute the Gram matrix of \(u_1,\dots,u_{r-1}\) from the distances within \(S\).

        The Gram matrix of a set of points determines their positions up to orthogonal transformations (see for instance Theorem 7.3.11, \cite{horn2013matrix}). So the pairwise distances within \(S\) determine \(u_i\) up to orthogonal transformations, and thus determine \(S\) up to isometry.
	\end{proof}
 
\renewcommand\thetheorem{2.3}
	
	\begin{corollary}
		Suppose a multiset \(R\) of \(d+3\) points of \(\mathbb{R}^m\) \((m\geq d\)) lies in a \(d\)-dimensional subspace. Then, given all pairwise distances within \(R\) except some pair \(uv\), it is possible to identify whether \(\newT=R\setminus\{u,v\}\) is \(d\)-independent. If $Q$ is $d$-independent, the distance \(uv\) can also be determined.
	\end{corollary}

\begin{proof}[Proof of \Cref{cor:GeometryFact}]
    From these distances, \Cref{lem:GeometryFact} allows us to reconstruct \(\newT\) up to isometry. This allows    to identify whether \(\newT\) is \(d\)-independent by checking the reconstruction of \(\newT\). Indeed, \(\newT\) is \(d\)-independent if and only if the \(u_i\) as in the proof of \Cref{lem:GeometryFact} are linearly independent.
    This holds if and only if \(G\) is positive definite (see for instance Theorem 7.2.10, \cite{horn2013matrix}), which can be computed and checked.

    Suppose \(\newT\) is \(d\)-independent. Note that by \Cref{lem:GeometryFact} it is also possible to reconstruct each of \(\newT\cup\{u\}\) and \(\newT\cup\{v\}\), up to isometry. But \(u,v\) lie in the \(d\)-dimensional subspace spanned by \(\newT\), so any isometry fixing \(\newT\) also fixes \(u,v\). So given an embedding of \(\newT\), this can be uniquely extended to embed \(u,v\). Thus it is possible to determine the distance \(uv\).
\end{proof}

\end{document}